\newcommand{\changesColor}{black}
\DeclareMathOperator*{\minimize}{minimize}
\DeclareMathOperator*{\aand}{\,and\,}
\DeclareMathOperator*{\subjectto}{subject\ to}
\newcommand{\figheight}{.7\linewidth}
\newcommand{\tablewidth}{1\linewidth}
\begin{document}

\title{Resource Optimization of Product Development Projects with Time-Varying Dependency Structure
\thanks{This work is funded in part by JSPS KAKENHI Grant Number
18K13777 and the open collaborative research program at National Institute of Informatics (NII) Japan (FY2018).}
}


\author{Masaki Ogura \and Junichi Harada \and Masako Kishida \and Ali Yassine
}


\institute{M.~Ogura \and Junichi Harada\at Division of Information Science, Nara Institute of Science and Technology, 8916-5 Takayama, Ikoma, Nara 630-0192, Japan\\
\email{oguram@is.naist.jp, harada.junichi.hh3@is.naist.jp}%
\and
M.~Kishida\at
Principles of Informatics Research Division, National
Institute of Informatics, Tokyo 101-8430, Japan\\
\email{kishida@nii.ac.jp}
\and 
A.~Yassine \at 
Department of Industrial
Engineering and Management,
American University of Beirut,
Beirut 1107-2020, Lebanon\\
\email{ali.yassine@aub.edu.lb}
}

\date{Received: date / Accepted: date}

\maketitle

\begin{abstract}
Project managers are continuously under pressure to shorten product development durations. One practical approach for reducing the project duration is lessening dependencies between different development components and teams. However, most of the resource allocation strategies for lessening dependencies place the implicit and simplistic assumption that the dependency structure between components is static (i.e., does not change over time). This assumption, however, does not necessarily hold true in all product development projects. In this paper, we present an analytical framework for optimally allocating resources to shorten the lead-time of product development projects having a time-varying dependency structure. We build our theoretical framework on a linear system model of product development processes, in which system integration and local development teams exchange information asynchronously and aperiodically. By utilizing a convexity result from the matrix theory, we show that the optimal resource allocation can be efficiently found by solving a convex optimization problem. We provide illustrative examples to demonstrate the proposed framework. \textcolor{\changesColor}{We also present boundary analyses based on major graph models to provide managerial guidelines for improving empirical PD processes.}\keywords{Project management \and resource management \and resource allocation systems \and time/cost/performance trade-offs \and project planning} 
\end{abstract}

\section{Introduction}

Projects are indispensable and central in most of the industries for performing several types of work~\cite{PMI2013}. For this reason, project management has been one of the major research themes in the field of engineering design during the last half-century. Since modern product structures are becoming increasingly complex due to global competition, technological advancements, and changing customer needs, the management of product development (PD) projects is of fundamental importance in various organizations. Despite this fact, there is still a lack of effective managerial methodologies for achieving PD project goals such as meeting prespecified deadlines, achieving desired performances, and keeping cost requirements~\cite{Cicmil2006,Muller2012}. It is even reported that some of the PD projects achieving their goals are often considered to be not successful by stakeholders~\cite{Cooke-Davies2002,Collyer2009}. Therefore, the development of effective methodologies for the modeling and planning of PD projects is highly anticipated by both managers and stakeholders.

Modularization is a widely adopted approach for effective understanding, management, and characterization of complex PD projects. Modularization consists of the following two steps: establishment of a modular architecture and development of design rules~\cite{Baldwin2000}. In the first step, we divide a product into smaller building blocks called modules. This division is performed in such a way that dependencies among the components within individual modules are maximized while dependencies between modules are minimized. Therefore, modularization allows project managers to facilitate a PD project by independently improving individual modules within the product~\cite{Yu2007,Borjesson2014}.

In practice, various technical, physical, and business constraints prohibit us from ignoring dependencies between modules and simply designing each module separately~\cite{Holtta-Otto2007}. Therefore, in the second step of modularization, we seek for the possibility of minimizing or eliminating dependencies outside module boundaries~\cite{Baldwin2000}. Such interdependencies can be reduced by investing in the design rules defining the connections or relationships between modules~\cite{Loch1998,Lee1997,Ahmadi1999}. The reduction is achieved through either the creation of a higher level design rule in the design structure matrix (DSM) or internalizing the rule within the design of each module~\cite{Baldwin2000}. This process helps us to develop a design rule as a mutual upfront agreement between modules~\cite{Martin2002,Frenken2006}.

On the other hand, the current practice for investing in design rules is often based on project managers' intuition or heuristic rules, which may not necessarily lead to the best outcome \cite{Adler1995,Loch1999}. Therefore, we find in the literature several decision support tools aimed at assisting the managers to cost-efficiently invest in design rules (see, e.g., \cite{Krishnan2001,Browning2007,Serrador2014} and references therein). \textcolor{\changesColor}{A distinctive approach to address this problem can be found in the sequence of works~\cite{Braha2004a,Braha2004,Braha2007}, in which the authors identify the fact that performance, robustness, resilience, and fragility of complex design networks are heavily dependent on their underlying connectivity structures. The authors found, by analyzing empirical PD processes, that large-scale design networks are characterized by heavy-tail degree distributions with highly connected modules, while most modules have small degrees. This finding suggests~\cite{Braha2007} that these degree connectivity patterns of complex design networks can be exploited and incorporated in a resource allocation for suppressing the amplification and propagation of design changes and errors through the engineering network.}

However, most of the aforementioned decision support tools in the literature rely on an implicit assumption that the PD project architecture is static and, therefore, does not change over time. This assumption is not consistent with reality; for example, information hiding between development teams give rise to a time-varying dependency structure, which causes persistent recurrence of problems such that progress oscillates between being on schedule and falling behind~\cite{Yassine2003,Lee1997}. Other sources of the time-variability include asynchronicity and random timing of task updates and information exchange~\cite{Mihm2003,Huberman1993} and fluctuations in the amount of rework that a unit of rework in a module causes to other modules~\cite{Huberman2005a}. In fact, \textcolor{\changesColor}{as firstly discovered in the seminal papers~\cite{Braha2006,Braha2009}}, time-variability of dependency structures or, in general, connectivity structures, can be found not only in PD projects but also in various other contexts such as human contact networks, online social networks, biological, and ecological networks \textcolor{\changesColor}{(see, also, the monographs \cite{Holme2015b,Masuda2016b} for recent surveys on this subject)}.

In this paper, we present an optimization framework for making a cost-efficient investment in design rules when the underlying dependency structure between modules is changing over time. We illustrate our theoretical framework by focusing on the PD project model with asynchronous interaction between system integration and local development teams~\cite{Yassine2003}. By using the stability theory of switched linear systems~\cite{Lin2009} adopted from the systems and control theory, we show that the proposed model is feasible, i.e., the amount of unfinished work converges to zero, if and only if the magnitude of the eigenvalues of a generalized work transformation matrix (WTM) is strictly less than one.

We then propose an analytical framework for optimally weakening the dependencies between different product components for accelerating PD projects. Since organizations seek for minimizing project lead-times under a predefined budget, time-cost trade-offs naturally arise in this context. In this paper, we consider the following two types of time-cost trade-off problems~\cite{Hartmann2010}. The first one is the budget-constrained dependency optimization, where we distribute a fixed amount of resource across the project for minimizing its lead-time. The other problem is the performance-constrained dependency optimization, where we distribute resource for achieving a specified performance objective while minimizing the cost of the resource. We show that both problems can be efficiently solved via \emph{convex optimization} techniques~\cite{Boyd2004}, which is in contrast with other decision support tools relying on, e.g., non-convex optimizations~\cite{Yassine2016}, genetic algorithms~\cite{Peteghem2010,Alcaraz2003}, iterative procedures~\cite{Boctor1996}, and backtracking procedures~\cite{Patterson1990}.

The proposed framework is tailored to a dynamical PD project in which a local team passes their outcome to the system team for integration~\cite{Joglekar2001}. We specifically consider the situation where, although the information update from the local team occurs regularly, the one from the system team occurs only intermittently~\cite{Yassine2003}. Such asymmetry in the communication frequency can arise due to high communication cost or unbalanced integration and development times. A major source of high communication cost is poor communications between teams, which could even result in failed projects. On the other hand, when the development time is significantly different from the integration time, one of the teams would have to wait for the completion of the task in the other team. In this context, a typical application area of the proposed framework is the design and development of large-scale components in complex industrial products such as vehicles, vessels, and aircraft.

The rest of this paper is organized as follows. In Section~\ref{sec:model}, we describe our model of PD processes under asynchronous and aperiodic interactions between system integration and local development teams. After studying the feasibility of the proposed PD process model in Section~\ref{sec:analysis}, in Section~\ref{sec:optimization} we develop an optimization framework for efficiently solving the time-cost trade-off problems. The obtained framework is illustrated with a real PD process in Section~\ref{sec:caseStudy}. \textcolor{\changesColor}{Further boundary analyses based on major graph models are performed in Section~\ref{sec:synth}.} We finally provide the conclusion of the paper as well as some discussions in Section~\ref{sec:con}.

\section{Asynchronous and aperiodic work transformation model}\label{sec:model}

In this section, we review the asynchronous work transformation model presented in~\cite{Yassine2003}. We then generalize the model to the case of random and aperiodic information exchange. We finally describe the problems of feasibility analysis and optimal resource allocation studied in this paper.

\subsection{Asynchronous work transformation} \label{subsec:pwt}

In the asynchronous and periodic work transformation model~\cite{Yassine2003}, there exist a pair of \emph{local} and \emph{system} teams working for the development of a product. The PD process contains $m$ tasks, and each task is separated into development and integration tasks that are performed by the local and system teams, respectively. As in~\cite{Yassine2003}, we let $L_i(k)$ and $S_i(k)$ represent the amount of unfinished work in the $i$th local and system task at time step~$k$, respectively. Let us define the vectors
\begin{equation*}
L(k) = \begin{bmatrix}
L_1(k) \\ \vdots \\ L_m(k)
\end{bmatrix},\ 
S(k) = \begin{bmatrix}
S_1(k) \\ \vdots \\ S_m(k)
\end{bmatrix}. 
\end{equation*}
If the local and system teams exchange information at every time step, the above variables evolve over time by the following equation: 
\begin{equation}\label{eq:def:baseLSeq}
\begin{multlined}
\begin{bmatrix}
L(k+1)\\ S(k+1)
\end{bmatrix} = 
\begin{bmatrix}
W_L & W_{SL} 
\\
W_{LS} & W_{S}
\end{bmatrix}
\begin{bmatrix}
L(k)\\ S(k) 
\end{bmatrix}, 
\end{multlined}
\end{equation}
where $W_L$, $W_{SL}$, $W_{LS}$, and $W_{S}$ are WTMs having nonnegative entries. For example, $W_{SL}$ is an entry-wise nonnegative matrix that captures the rework fraction created by system tasks~$S(k)$ for the corresponding local tasks~$L(k)$. The other WTMs are understood in a similar manner.

As discussed and demonstrated by the authors in~\cite{Yassine2003}, even though the local team may frequently provide the system team with local updates, the system team, on the other hand, may provide only intermittent feedback to the local team. To model this situation, they introduce the third variable, $H_i(k)$, denoting the amount of finished work in the $i$th system task that is not yet transfered to the local team. When feedback from the system to local team occurs, the finished work will be cleared on the part of the system team and will be transferred to the local team. Also, until feedback occurs, the finished work keeps accumulating within the system team. In this situation, the vectorial variable
\begin{equation*}
H(k) = \begin{bmatrix}
H_1(k) \\ \vdots \\ H_m(k)
\end{bmatrix} 
\end{equation*}
dynamically evolves as
\begin{equation*}
\begin{multlined}[.95\linewidth]
H(k+1) = 
\begin{cases}
0,\quad \mbox{if feedback occurs at time~$k$,}
\\
H(k) + W_{SH} S(k),\quad \mbox{otherwise, }
\end{cases}
\end{multlined}
\end{equation*}
where $W_{SH}$ is a WTM from unfinished system tasks to finished system tasks. Similarly, the amount of unfinished work in the local team evolves by the following difference equation:
\begin{equation*}
\begin{multlined}[.95\linewidth]
L(k+1) = 
\begin{cases}
W_L L(k) + W_{SL} S(k) + H(k),
& 
\\\quad
\mbox{if feedback occurs at time~$k$,}
\\
W_L L(k) ,\quad
\mbox{otherwise.}
\end{cases}
\end{multlined}
\end{equation*}
Finally, notice that the amount of unfinished work in the system team evolves in the same way as in~\eqref{eq:def:baseLSeq}. Combining the above equations, we see that the joint state variable
\begin{equation}\label{eq:state}
x(k) = \begin{bmatrix}
L(k)\\ S(k) \\ H(k)
\end{bmatrix}
\end{equation}
evolves over time as 
\begin{equation}\label{eq:def:intermittentLSeq}
\begin{multlined}
x(k+1) = 
\begin{cases}
A_{1}x(k), & \mbox{if feedback occurs at time~$k$,}\\
A_2x(k), & \mbox{otherwise, }
\end{cases}
\end{multlined}
\end{equation}
where the matrices $A_1$ and $A_2$ are given by 
\begin{equation*}
A_1  = \begin{bmatrix}
W_L & W_{SL}  & I
\\
W_{LS} & W_{S} & O
\\
O & O & O
\end{bmatrix}, \ 
A_2 = \begin{bmatrix}
W_L & O & O 
\\
W_{LS} & W_{S} & O 
\\
O & W_{SH} & I
\end{bmatrix}.
\end{equation*}

In~\cite{Yassine2003}, the authors considered the situation where feedbacks occur periodically. By using the Floquet theory for periodic linear dynamical systems, the authors clarified how intermittent feedback from the system team leads to persistent recurrence of PD problems called design churn effects, where progress oscillates between being on schedule and falling behind.

\subsection{Specifications of WTMs}

In this paper, we adopt the specification of WTMs proposed in~\cite{Yassine2003}. Let $\Omega_L$ and $\Omega_S$ denote the DSMs within the local and system teams, respectively. For each $\sigma = L, S$, the diagonal elements of~$\Omega_{\sigma}$ give work completion coefficients, while the off-diagonal elements ($\Omega_{\sigma, ij}$ with $i\neq j$) denote the amount of rework created for the task~$i$ per unit of work done on the task~$j$. Also, for describing the dependency between the local and system teams, inter-component dependency matrices (IDMs) are introduced. Let $\Omega_{LS}$ denote the IDM, whose element~$\Omega_{LS,ij}$ represents the amount of rework created for system task~$i$ per unit of work done on local task~$j$. The IDM~$\Omega_{SL}$ is understood in the same manner.

Then, the authors in~\cite{Yassine2003} have adopted the following formula for computing the WTMs from the DSMs and IDMs. In their formula, the WTM within the local team is given by
\begin{equation}\label{eq:WLij}
W_{L, ij} = 
\begin{cases}
1-\Omega_{L, ii},&\mbox{if $i=j$}, 
\\
\Omega_{L, ij}\Omega_{L, jj},&\mbox{if $i\neq j$}.
\end{cases}
\end{equation}
The system WTM $W_S$ is computed in the same manner. Similarly, the IDMs are given by the formulas
\begin{equation}\label{eq:WLSijWSLij}
\begin{aligned}
W_{LS, ij} &= \Omega_{LS, ij} \Omega_{L, jj}, 
\\
W_{SL, ij} &= \Omega_{SL, ij} \Omega_{S, jj},
\end{aligned}
\end{equation}
for all $1\leq i, j\leq m$. Finally, they let $W_{SH} = W_{SL}$.

\subsection{Aperiodic work transformation}

\begin{figure}
\centering
\includegraphics[width=.8\linewidth]{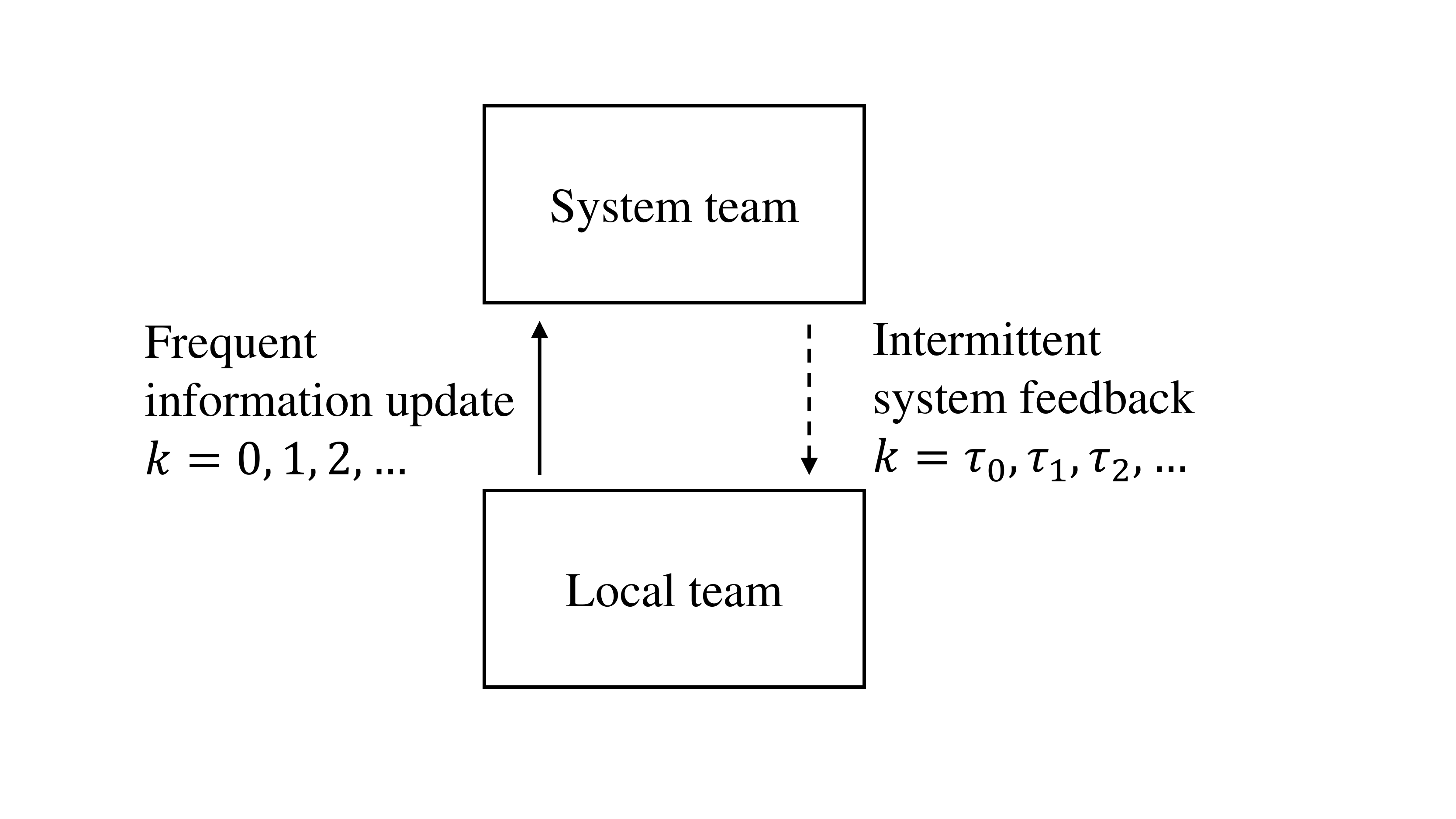}
\caption{{Asynchronous and aperiodic information exchange between the system and local teams.}}
\label{fig:probsetting}
\end{figure}

Let $\tau_0$, $\tau_1$, $\dotsc$ denote the times at which feedback from the system team to the local team occurs (see Fig.~\ref{fig:probsetting} for a schematic picture). Without loss of generality, we assume that $\tau_0 = 0$. Let
\begin{equation*}\label{eq:Deltatauell}
\Delta \tau_{\ell}  = \tau_{\ell+1} - \tau_{\ell}
\end{equation*}
denote the interval between feedbacks. In \cite{Yassine2003}, it was assumed that the interval $\Delta \tau_{\ell}$ is constant. However, in practical PD processes, it is common that the interval of feedbacks fluctuates due to both intrinsic and exogenous factors. For this reason, in this paper, we consider a generalized work transformation model where the interval experiences stochastic fluctuations. Let $h_{\min}$ and $h_{\max}$ denote the minimum and maximum length of interval, i.e., we assume that 
\begin{equation}\label{eq:def:hminhmax}
0< h_{\min} \leq  \Delta \tau_{\ell} \leq h_{\max}. 
\end{equation} 
We furthermore assume that the random intervals~$\Delta \tau_{\ell}$ are independent and identically distributed with the probability
\begin{equation}\label{eq:randomInerval}
P(\Delta \tau_{\ell} = h) = p_h, \ h =1, 2, \dotsc, 
\end{equation}
which can be estimated by the manager from the past history of project management data. We remark that, under this formulation, we can no longer depend on the Floquet theory on periodic linear dynamical systems that was used in~\cite{Yassine2003} because the dynamical system~\eqref{eq:state} is no longer a periodic linear system but a stochastic switched linear system in the context of systems and control theory.

In this paper, we address the following two fundamental questions on the PD process described above. The first question is about the feasibility of the PD process. Specifically, we are interested in if the state variable~$x(t)$ in~\eqref{eq:state} tends to $0$ as $t\to\infty$. This feasibility analysis problem is investigated in Section~\ref{sec:analysis}. The other question is on the PD process optimization. Specifically, if the PD process turns out to be infeasible, or, feasible but too slow, we are interested in how to optimally improve the PD process by utilizing a limited budget. In Section~\ref{sec:optimization}, we study the problem of tuning the dependencies between different product components and development teams for optimally improving the process in a cost-efficient manner.

\color{black}
\subsection{Literature review: Linear systems modeling}

Before developing our framework in the next section, in this section we give a brief review of the linear systems modeling of PD processes and, then, state the contribution of this paper. 

\subsubsection{Static dependency structure}

The linear systems approach for modeling PD processes dates back to the seminal work by Smith and Eppinger in~\cite{Smith1995}, where the authors modeled the amount of unfinished work by a linear system having, as its transition matrix, the WTM of the development process. Based on the linear system model, the authors illustrated the critical role of eigenvalues and eigenvectors of the WTM for understanding the behavior of PD processes. They have also shown that the values of the dominating eigenvector of the WTM allow us to identify the design features that require a large number of iterations. This linear systems modeling was later extended to continuous-time models in terms of differential equations~\cite{Joglekar2001,Ong2003,Kim2007a}. Although a linear system model is an idealization of realistic PD processes, the model indeed allows us to obtain new knowledge to and identify the critical issues in PD processes~\cite{Smith1995}. For this reason, the linear systems modeling has been adopted in other contexts of engineering management including value-based managements~\cite{Hahn2012}, cash flow management~\cite{Cui2010}, and the KPI analysis of supply chains~\cite{Cai2009}.

When the analysis of a PD process indicates that the project is not feasible within a pre-specified period and, therefore, needs to be accelerated, the project manager would consider improving the process by allocating additional human or technological resources. In this direction, we can find in the literature several methodologies for tuning the values of WTMs for process improvements. Lee~et~al.~\cite{Lee2004b} presented an analytical framework for tuning the value of the WTM based on the pole-placement method developed in the systems and control theory. Cheng and Chu~\cite{Cheng2012} extended this approach by using fuzzy numbers and proposed an optimization framework for assigning tasks to multi-skilled employees. Joglekar and Ford~\cite{Joglekar2005} employed a more advanced control theoretical tool called the LQ optimal control, which allows us to find a resource allocation that optimizes the performance of the PD process over an infinite time-horizon. 

\subsubsection{Time-varying dependency architecture}

Huberman and Wilkinson~\cite{Huberman2005a} proposed a general linear system model where the WTM randomly fluctuates over time. The sources of such fluctuations include asynchronicity of information exchange, uncertainty in performance evaluation, and fluctuations in the amount of development resources. Under the assumption that the fluctuations of the WTM are independent and identically distributed, the authors showed that the time-averaged dynamics of the PD process can be analyzed by the eigenvalues and eigenvectors of the time-average of the WTM. The authors also investigated the dependencies of the higher order moments of the work vectors on the parameters of the probability distributions of the fluctuations.

The authors in~\cite{Xiao2011} considered a linear system model, in which reworks of tasks occur randomly. The authors proposed an analytical method, based on the eigendecomposition of the WTM, to mitigate the development process for minimizing the project duration. However, since their method is based on the analysis of the worst-case in which reworks occur with probability one, the mitigation strategy resulting from their methodology is not expected to be cost-effective.

Yassine et al.~\cite{Yassine2003} investigated the effect of asynchronous information exchanges between local development teams and the system integration team in PD processes. The authors specifically modeled the asynchronicity by a time-varying linear system whose WTM changes periodically in accordance with the period of information exchanges. Using the Floquet theory of linear periodic systems~\cite{Richards1983}, the authors have shown that the churn phenomena, which are frequently observed in realistic PD processes, are caused by the asynchronicity. The authors also suggested a heuristic methodology for improving the project development processes based on the eigendecomposition of a certain matrix following from the Floquet theory.

As we have reviewed so far in this subsection, although there exist various analytical methodologies for the improvement and optimization of fluctuation-free PD processes, we still find a lack of such methodologies tailored for the development processes subject to temporal fluctuations. An exception is a work by Yassine et al.~\cite{Yassine2003}, where the authors proposed an eigenvector-based method for identifying design components requiring improvements. However, it remains to be an open question how much resources we should allocate when multiple such components are identified. We also remark that the optimality of resource allocations is rarely discussed in the literature as well.
\color{black}

\section{Feasibility analysis}\label{sec:analysis}

In this section, we analyze the feasibility of the PD process described in the last section. We specifically show that the PD process is feasible if and only if the magnitudes of the eigenvalues of a generalized WTM are strictly less than one.

{We start our analysis by reviewing the results presented by the authors in~\cite{Yassine2003}, in which they have considered the case of periodic feedbacks from the system team to the local team. Let the period of the feedback be denoted by $T>0$. Also, let the dimension of the vector $x$ be denoted by~$n = 3m$. Then, by using the Floquet theory of periodic linear systems, the authors found that the state variable~\eqref{eq:state} admits the representation
\begin{equation}\label{eq:Yassine}
x(k) = \Pi(k) Z^k g,\quad k=0, 1, 2, \dotsc,  
\end{equation}
where $Z \in \mathbb R^{n\times n}$ is a matrix, $g \in \mathbb R^n$ is a  vector, and $\Pi(0)$, $\Pi(1)$, $\Pi(2)$,~\dots~is a sequence of $n\times n$ matrices having period~$T$. By using the representation~\eqref{eq:Yassine}, the authors identified the following two sources of design churn effects: the one arising from the periodicity of the matrix sequence~$\Pi$, and the one from the positions of the eigenvalues of the matrix~$Z$. In their analysis, an important role is played by the monodromy matrix
\begin{equation}\label{eq:monodromy}
A_2^{T-1}A_1, 
\end{equation}
which represents the work transformation between feedback epochs, i.e., the transitions of the sub-sampled state variables 
\begin{equation}\label{eq:defxi:pre}
x(0),\ x(T),\ x(2T), \dotsc .
\end{equation}
In fact, the matrix $Z$ in \eqref{eq:Yassine} is obtained from the diagonalization of the monodromy matrix~\eqref{eq:monodromy}.}

However, in our case where the interval of feedbacks is stochastically modeled, the monodromy matrix~\eqref{eq:monodromy} equals
\begin{equation}\label{eq:randomMono}
M = A_2^{\Delta \tau_{\ell}-1}A_1, 
\end{equation}
which is a random matrix. Due to the stochasticity of the monodromy matrix, we can no longer apply the Floquet theory to our PD process. However, we can still attempt to adopt the methodology based on the monodromy matrix. In order to illustrate the idea,  let us define the random vectors
\begin{equation}\label{eq:def:xi}
\zeta(\ell) = x(\tau_{\ell}),\ \ell = 0, 1, 2, \dotsc. 
\end{equation}
As in \eqref{eq:defxi:pre}, the random vector~\eqref{eq:def:xi} represents the state right before the feedback from the system team. From \eqref{eq:state} and \eqref{eq:randomMono}, we see that the vectors $\zeta(\ell)$ satisfy the relationship
\begin{equation}\label{eq:xiDifferenceEq}
\zeta(\ell+1) = M \zeta(\ell). 
\end{equation}
Being a stochastic difference equation, Equation~\eqref{eq:xiDifferenceEq} is still not easy to analyze. In order to avoid the difficulty, let us take the mathematical expectation on both sides of the equation~\eqref{eq:xiDifferenceEq} to obtain
\begin{equation}\label{eq:xiNext}
E[\zeta(\ell+1)] = E[A_2^{\Delta \tau_{\ell}-1}A_1   \zeta(\ell)]. 
\end{equation}
Since the feedback interval~$\Delta \tau_{\ell}$ is drawn from a distribution independent of the state variable~$\zeta(\ell)$, we can decompose the expectation on the right-hand side of \eqref{eq:xiNext} to obtain
\begin{equation}\label{eq:xiNextNext}
E[\zeta(\ell+1)] =  E[A_2^{\Delta \tau_{\ell}-1}]  A_1 E[\zeta(\ell)]. 
\end{equation}
Let us introduce the vector 
\begin{equation*}
z(\ell) = E[\zeta(\ell)]. 
\end{equation*}
Then, from \eqref{eq:xiNextNext}, we obtain 
\begin{equation}\label{eq:xiNextNextNext}
z(\ell+1) = \mathcal M z(\ell),  
\end{equation}
where the matrix~$\mathcal M $ is given by $\mathcal M  = E[A_2^{\Delta \tau_{\ell}-1}]  A_1$. Notice that, by \eqref{eq:randomInerval}, we can calculate the matrix~$\mathcal M$ by the formula
\begin{equation}\label{eq:complexMat}
\mathcal M = (p_{h_{\min}}A_2^{h_{\min}-1} + \cdots + p_{h_{\max}}A_2^{h_{\max}-1}) A_1, 
\end{equation}
where $h_{\min}$ and $h_{\max}$ are the minimum and maximum of the interval of two subsequent feedbacks and satisfy \eqref{eq:def:hminhmax}. 

We can interpret equation \eqref{eq:xiNextNextNext} as representing the expected transformation of works between subsequent feedback epochs, with $\mathcal M$ representing a generalized WTM. As is well-known, if the magnitudes of the eigenvalues of the matrix~$\mathcal M$ are all less than one, then $z(\ell)$, the expected state variable at the feedback epochs, converges to zero as $\ell$ tends to infinity. Notice that, due to the stochasticity of the feedback intervals, this convergence does not necessarily imply the convergence of the original state variable~$x(k)$ as $k$ tends to infinity. However, by utilizing the theory of switched linear systems~\cite{Ogura2014d} in the context of systems and control theory, we can prove that the condition on the magnitude of the eigenvalues of the generalized WTM is, in fact, sufficient for the feasibility of the PD process:

\begin{theorem}\label{thm:main}
The expected amounts of unfinished works, i.e., $E[L(k)]$ and $E[S(k)]$, converge to $0$ as $k$ tends to $\infty$ if and only if the magnitudes of the eigenvalues of~$\mathcal M$ are less than one.
\end{theorem}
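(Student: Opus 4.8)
The plan is to control $E[x(k)]$ at \emph{every} time $k$ by powers of $\mathcal{M}$, exploiting that $A_{1},A_{2}\ge 0$ entrywise and that the initial backlog satisfies $x(0)\ge 0$. Let $N(k)$ denote the index of the last feedback at or before time $k$ and $J(k)=k-\tau_{N(k)}$ the time elapsed since that feedback. Iterating \eqref{eq:def:intermittentLSeq} inside one inter-feedback interval gives the factorisation $x(k)=C_{J(k)}\,\zeta(N(k))$, where $C_{0}=I$ and $C_{j}=A_{2}^{\,j-1}A_{1}$ for $1\le j\le h_{\max}-1$ form a finite family of nonnegative matrices. Two ingredients then drive the argument: the identity $E[\zeta(\ell)]=\mathcal{M}^{\ell}\zeta(0)$ already obtained in \eqref{eq:xiNextNextNext}, and the \emph{deterministic} bound $N(k)\ge k/h_{\max}-1$, valid because successive feedbacks are at most $h_{\max}$ apart, which forces $N(k)\to\infty$.

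For the sufficiency (\emph{if}) direction I would use a positivity sandwich. Since $\zeta(\ell)\ge 0$ and $C_{J(k)}\le C_{\Sigma}:=\sum_{j=0}^{h_{\max}-1}C_{j}$ entrywise, we have $0\le x(k)\le C_{\Sigma}\,\zeta(N(k))$, whence, writing $m(k)=\lceil k/h_{\max}\rceil-1$ for a deterministic lower bound on $N(k)$,
\begin{equation*}
0\le E[x(k)]\le C_{\Sigma}\,E[\zeta(N(k))]\le C_{\Sigma}\sum_{\ell\ge m(k)}\mathcal{M}^{\ell}\zeta(0),
\end{equation*}
where the last inequality discards the events $\{N(k)=\ell\}$, which is legitimate because $\zeta(\ell)\ge 0$. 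Under $\rho(\mathcal{M})<1$ the tail $\sum_{\ell\ge m(k)}\mathcal{M}^{\ell}\zeta(0)=\mathcal{M}^{m(k)}(I-\mathcal{M})^{-1}\zeta(0)\to 0$ as $m(k)\to\infty$, so the whole of $E[x(k)]$ — and in particular $E[L(k)]$ and $E[S(k)]$ — converges to $0$.

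For necessity (\emph{only if}) I would pass to a generating function. Decomposing $\sum_{k}x(k)s^{k}$ over the feedback intervals and using that each $\Delta\tau_{\ell}$ is independent of the history yields, for $s$ small enough that the series converges,
\begin{equation*}
\sum_{k\ge 0}E[x(k)]\,s^{k}=R(s)\,(I-\hat{M}(s))^{-1}\zeta(0),
\end{equation*}
with $\hat{M}(s)=\sum_{h}p_{h}s^{h}A_{2}^{\,h-1}A_{1}$ and $R(s)=\sum_{h}p_{h}\sum_{j=0}^{h-1}s^{j}C_{j}$, both nonnegative and satisfying $\hat{M}(1)=\mathcal{M}$. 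Since $\hat{M}(s)$ grows entrywise with $s$, its spectral radius increases continuously from $\rho(\hat{M}(0))=0$ to $\rho(\mathcal{M})$ on $[0,1]$. If $\rho(\mathcal{M})\ge 1$ there is thus $s^{*}\in(0,1]$ with $\rho(\hat{M}(s^{*}))=1$, at which $(I-\hat{M}(s))^{-1}$ is singular; taking $\zeta(0)$ to be the (nonnegative) Perron eigenvector of $\mathcal{M}$, the nonnegative power series on the left has radius of convergence $s^{*}\le 1$. When $\rho(\mathcal{M})>1$ we get $s^{*}<1$, so the coefficients $E[x(k)]$ grow geometrically along a subsequence and cannot tend to $0$, which already settles this case.

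The step I expect to be the real obstacle is the borderline case $\rho(\mathcal{M})=1$, i.e.\ $s^{*}=1$: there the series merely \emph{diverges} at $s=1$, which is still compatible with $E[x(k)]\to 0$ (as the harmonic series shows), so the transform argument is by itself inconclusive. This is exactly where I would invoke the positive semi-Markov/switched systems theory of \cite{Ogura2014d}: a matrix renewal (Tauberian) analysis shows that the critical eigenvalue of $\mathcal{M}$ contributes a nonvanishing amount to $E[x(k)]$ along the feedback epochs, whose asymptotic density $1/E[\Delta\tau_{\ell}]$ is positive by the elementary renewal theorem. A final bookkeeping point is to ensure that this persistent mode is visible in the $L$- or $S$-blocks rather than confined to the auxiliary variable $H$; this follows from the coupling encoded in $A_{1}$ and $A_{2}$, through which $H$ is fed by $S$ and reinjected into $L$ at every reset.
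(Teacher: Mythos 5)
Your argument is essentially correct, but it takes a genuinely different route from the paper. The paper's proof is a black-box application of the theory of regenerative switched linear systems: it defines the switching signal $\xi(k)$, observes that the i.i.d.\ intervals $\Delta\tau_\ell$ make $\xi$ regenerative, checks the hypotheses A1--A3 of \cite{Ogura2014d} (nonnegativity of the WTMs and finiteness of $h_{\max}$), and then invokes Theorem~25 of that reference to obtain both directions at once. You instead build the equivalence by hand: the positivity sandwich $0\le E[x(k)]\le C_{\Sigma}\sum_{\ell\ge m(k)}\mathcal M^{\ell}\zeta(0)$, combined with the deterministic bound $N(k)\ge k/h_{\max}-1$, is a clean and complete proof of sufficiency, and your generating-function identity with $\hat M(s)=\sum_h p_h s^h A_2^{h-1}A_1$ correctly settles necessity when $\rho(\mathcal M)>1$ (one small repair: the Perron vector of a reducible nonnegative $\mathcal M$ need not be strictly positive, so take a strictly positive initial backlog $\zeta(0)$ to guarantee that $(I-\hat M(s))^{-1}\zeta(0)$ actually diverges as $s\uparrow s^{*}$). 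Your closing observation that a persistent mode cannot hide in the $H$-block is also right, since $H(k)$ is reset at every feedback and is therefore a sum of at most $h_{\max}$ recent $W_{SH}S(\cdot)$ terms. What your approach buys is transparency: it makes explicit why $\rho(\mathcal M)<1$ controls the state at \emph{all} times and not merely at the feedback epochs, something the paper delegates entirely to the citation.

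The one genuine residual gap is exactly the one you flag: the critical case $\rho(\mathcal M)=1$, where the transform argument only yields divergence of $\sum_k E[x(k)]$, which is compatible with $E[x(k)]\to 0$. The naive fix---bounding $E[\zeta(\ell)]=\mathcal M^{\ell}\zeta(0)$ by nearby values of $E[x(t)]$---fails because the window $[\ell h_{\min},\ell h_{\max}]$ contains $O(\ell)$ time points, so $E[x(t)]\to 0$ only gives $E[\zeta(\ell)]=o(\ell)$, which does not contradict $\mathcal M^{\ell}\zeta(0)$ staying bounded away from zero. Falling back on the renewal/Tauberian analysis of \cite{Ogura2014d} for this boundary case is acceptable (it is no more than what the paper itself does for the entire theorem), but as written your proof is not self-contained there, and you should state explicitly that this case is being outsourced rather than proved.
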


\begin{proof}
Define the stochastic process $\xi= \{\xi(k)  \}_{k\geq 0}$ by
\begin{equation*}
\xi(k) = \begin{cases}
1, & \mbox{if feedback occurs at time~$k$}, 
\\
2, & \mbox{otherwise}. 
\end{cases}
\end{equation*}
By the assumption that the intervals~$\Delta \tau_{\ell}$ are independent and identically distributed, we can show that the stochastic process~$\xi$ is regenerative (see~\cite{Smith1955} for the details). Moreover, under this notation, the PD dynamics~\eqref{eq:def:intermittentLSeq} admits the  representation~$x(k+1) = A_{\xi(k)} x(k)$ and, therefore, is a regenerative switched linear system defined in~\cite{Ogura2014d}. Moreover, we can easily confirm that the dynamics \eqref{eq:def:intermittentLSeq} satisfies the three conditions A1, A2, and A3 given in~\cite{Ogura2014d}. The dynamics satisfies A1 because the WTMs are nonnegative. Also, the conditions~A2 and~A3 are satisfied because $h_{\max}$ is finite. Hence, Theorem~25 in~\cite{Ogura2014d} shows that Theorem~\ref{thm:main} holds true.
\end{proof}

\textcolor{black}{By Theorem~\ref{thm:main},  the maximum magnitude of the eigenvalues of the matrix~$\mathcal M$, i.e., the spectral radius~$\rho(\mathcal M)$, determines the feasibility of the PD project. Therefore, the quantity serves as an indicator for the PD manager to quickly predict the final consequence of his or her PD project. If the quantity is less than one, then the manager is assured about the validity of the current resource allocation in design rules. On the other hand, if the quantity is larger than one, then the manager should be warned and urged to reconsider the current practice. For this reason, we call the spectral radius the \emph{feasibility index} of the PD project. In the next section, we present an analytical framework for cost-efficiently optimizing the feasibility index.}

\section{Optimizing dependencies}\label{sec:optimization}

Based on the feasibility analysis in the last section, in this section, we study the problem of tuning the dependencies between distinct design components or teams to improve a nominal, possibly infeasible, PD process. We specifically consider the following PD optimization problem. Suppose that we can use resources for decreasing the dependencies. Assuming that the resources have an associated cost and that we are given a fixed budget, how should we distribute our resource throughout the local and system tasks to accelerate the PD process?

The purpose of this section is to present a resource allocation framework for tuning the dependency terms in the DSMs and IDMs to optimize the PD process under aperiodic interactions between the system integration and local development teams. We show that, under the assumption that the cost functions for tuning the dependencies are posynomials, we can transform the resource allocation problem to an equivalent convex optimization problem.

\subsection{Problem formulation}

In this paper, we shall focus on tuning the values of the inter-component or inter-team dependencies
\begin{equation}\label{eq:dependencies}
\{ \Omega_{\sigma, ij}: (\sigma \in \{LS, SL\}) \mbox{ or } (i\neq j \aand \sigma\in \{L, S\})  \}.
\end{equation}
We assume that there is a cost associated with tuning the strength of dependencies. Let $f_{L, ij}(\Psi_{L, ij})$ denote the cost of tuning the nominal dependency~$\Omega_{L, ij}$ to $\Psi_{L, ij}$. In other words, if we want to set the dependency of the $i$th local component on the $j$th local component to be $\Psi_{L, ij}$, we need to pay $f_{L, ij}(\Psi_{L, ij})$ monetary units. Since we do not need to consider improving the dependencies that are originally zero, the total cost for the improvement of the DSM~$\Omega_L$ equals
\begin{equation*}
C_{L} = \sum_{\substack{(i, j)\colon \Omega_{L, ij}\neq 0 \\{\aand} i\neq j}} f_{L, ij}(\Psi_{L, ij}). 
\end{equation*}
Similarly, we introduce the cost functions~$f_{LS, ij}(\Psi_{LS, ij})$, $f_{SL, ij}(\Psi_{SL, ij})$, and $f_{S, ij}(\Psi_{S, ij})$ for tuning the nominal DSM~$\Omega_{S}$ and IDMs~$\Omega_{LS}$ and  $\Omega_{SL}$, respectively. Then, the total cost for the improvement of the DSM and IDMs are given by
\begin{equation*}
\begin{aligned}
C_{S} &= \sum_{\substack{(i, j)\colon \Omega_{S, ij}\neq 0 \\{\aand} i\neq j}}
f_{S, ij}(\Psi_{S, ij}), 
\\
C_{LS} &= \sum_{(i, j)\colon\Omega_{LS, ij}\neq 0} f_{LS, ij}(\Psi_{LS, ij}), 
\\
C_{SL} &= \sum_{(i, j)\colon\Omega_{SL, ij}\neq 0} f_{SL, ij}(\Psi_{SL, ij}). 
\end{aligned}
\end{equation*}
Thus, the total cost for the improvement of the entire development process equals 
\begin{equation}\label{eq:totalCost}
C = C_L + C_S + C_{LS} + C_{SL}. 
\end{equation}

We impose the following two natural restrictions on the new DSMs and IDMs. First, the new matrices must be ``better'' than the nominal ones. In other words, we impose the following inequalities
\begin{equation}\label{eq:Psi:upper}
\Psi_{\sigma,ij} \leq \Omega_{\sigma,ij}, 
\end{equation} 
where $\sigma$ denotes any one of the strings $L$, $S$, $LS$, and $SL$. Secondly, we assume that there is a certain management limitation in improving the values of the matrices, which we model by the inequality
\begin{equation}\label{eq:Psi:lower}
\Psi_{\sigma,ij} \geq \epsilon \Omega_{\sigma,ij}, 
\end{equation} 
where $\epsilon \in (0, 1)$ is a constant dependent on projects. This inequality implies that the possible improvement of the values of the matrices is at most $100(1-\epsilon)$\%. This implies that, since $\epsilon > 0$, we cannot completely eliminate a dependency that is present in the nominal PD process. 

From our feasibility analysis in the last section, we know that the smaller the feasibility index, the faster the amounts of unfinished work decrease. For this reason, we formulate our first project optimization problem as follows: 

\begin{problem}[Budget-constrained dependency optimization]\label{prb:}
Given cost functions $f_{\sigma, ij}$, a constant $\epsilon > 0$, and a budget~$B>0$, find the new DSMs and IDMs $\Psi_\sigma$ that minimize the feasibility index of the PD project while satisfying the budget constraint  on the cost \eqref{eq:totalCost}
\begin{equation}\label{eq:buggetConst:DSM}
C \leq B
\end{equation}
and the constraints \eqref{eq:Psi:upper} and \eqref{eq:Psi:lower}. 
\end{problem}

We formulate another PD optimization problem of practical interest. In the budget-constrained dependency optimization problem, we need to distribute our resource in the PD process while keeping the corresponding cost within a given available budget. However, when the main KPI is in reducing the improvement cost, the manager would be interested in directly minimizing the cost, $C$, while making sure that the PD process is feasible. From this perspective, we formulate an alternative optimization problem as follows:

\begin{problem}[Performance-constrained dependency optimization]\label{prb:performance:DSM}
Given cost functions $f_{\sigma, ij}$, a constant $\epsilon > 0$, and a required performance $r \in [0, 1)$, find the new DSMs and IDMs that minimize the cost $C$  \eqref{eq:totalCost}  while satisfying  the constraints \eqref{eq:Psi:upper} and \eqref{eq:Psi:lower} as well as the following requirement on the feasibility index: 
\begin{equation}\label{eq:performanceConst}
\rho(\mathcal M) \leq r. 
\end{equation}
\end{problem}

Mathematically, the budget-constrained dependency optimization problem is formulated as 
\begin{equation}\label{eq:origOpt}
\begin{aligned}
\minimize_{\Psi_{\sigma, ij}}\ \ \ \ \,    & \rho(\mathcal M )
\\
\subjectto\ \ \ \ & \eqref{eq:Psi:upper},\,\eqref{eq:Psi:lower},\, \eqref{eq:buggetConst:DSM}. 
\end{aligned}
\end{equation}
Also, the performance-constrained dependency optimization problem is formulated as 
\begin{equation}\label{eq:performanceOPT}
\begin{aligned}
\minimize_{\Psi_{\sigma, ij}}\ \ \ \ \,& C
\\
\subjectto\ \ \ \ & \eqref{eq:Psi:upper},\,\eqref{eq:Psi:lower},\, \eqref{eq:performanceConst}. 
\end{aligned}
\end{equation}

\subsection{Transformation to convex optimizations}

The optimization problems~\eqref{eq:origOpt} and~\eqref{eq:performanceOPT} are not trivial to solve because the problems involve the spectral radius of the complicated matrix sum~$\mathcal M$ of the form~\eqref{eq:complexMat}. Although one may apply a heuristic optimization techniques such as the gradient descent method, such an approach can result in a solution that is only locally optimal, yielding a not necessarily effective allocation of resources. Therefore, it is desirable to establish a reliable method for obtaining a globally optimal solution to the optimization problems~\eqref{eq:origOpt} and~\eqref{eq:performanceOPT}. 

In this subsection, we shall show that the optimization problems can be efficiently solved if the cost functions~$f_{\sigma, ij}$ belong to a wide class of functions called \emph{posynomials} (see, e.g.,~\cite{Boyd2007}), which we review below. Let $f(X)$ be a real function defined for a positive scalar variable~$X>0$. We say that $f$ is a \emph{monomial} if there exist $c>0$ and a real number~$a$ such that $f(X) = cX^{a}$. We say that $f$ is a \emph{posynomial} if $f$ is a sum of monomials. For example, $f(X) = X^{-2} + 2 + 3X^{1.5}$ is a posynomial because $X^{-2}$, $2$, and $3X^{1.5}$ are monomials. The following lemma shows that posynomials exhibit a numerically good property with respect to the auxiliary variable
\begin{equation}\label{eq:logV}
Y = \log X. 
\end{equation}

\begin{lemma}[\cite{Boyd2007}]\label{lem:llconvexity}
If the function~$f(X)$ is a posynomial, then the function~$F$ defined by $F(Y) = \log f(\exp Y)$ is a convex function of~$Y$.
\end{lemma}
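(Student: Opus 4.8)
The plan is to write $F$ in closed form directly from the definition of a posynomial and recognize it as a \emph{log-sum-exp} of affine functions, whose convexity is standard. Writing the posynomial as $f(X) = \sum_{k=1}^{K} c_k X^{a_k}$ with each $c_k > 0$ and $a_k$ real, the substitution $X = \exp Y$ followed by taking logarithms yields
\begin{equation*}
F(Y) = \log \sum_{k=1}^{K} c_k \exp(a_k Y) = \log \sum_{k=1}^{K} \exp(b_k + a_k Y),
\end{equation*}
where $b_k = \log c_k$. Each exponent $b_k + a_k Y$ is affine in $Y$, so the whole matter reduces to the convexity of the map $(u_1, \dotsc, u_K) \mapsto \log \sum_k \exp u_k$ together with the fact that composition with the affine substitution $u_k = b_k + a_k Y$ preserves convexity.

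First I would treat the basic building block: a single monomial maps to $\log(c_k \exp(a_k Y)) = b_k + a_k Y$, which is affine and hence trivially convex. This confirms that $Y \mapsto \log(\,\cdot \circ \exp)$ is the natural linearizing change of variables and isolates where nontrivial curvature can enter, namely the summation over $k$.

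The main step is then to establish convexity of $F$ itself. Since the lemma is stated for a scalar variable, it suffices to verify $F''(Y) \geq 0$. Writing $w_k(Y) = \exp(b_k + a_k Y) > 0$ and $W = \sum_k w_k$, a direct differentiation gives $F' = W'/W$ and a numerator for $F''$ of the form
\begin{equation*}
\left(\sum_k a_k^2 w_k\right)\left(\sum_k w_k\right) - \left(\sum_k a_k w_k\right)^2 \geq 0,
\end{equation*}
which is nonnegative by the Cauchy--Schwarz inequality applied to the vectors with entries $a_k \sqrt{w_k}$ and $\sqrt{w_k}$. Hence $F'' \geq 0$ and $F$ is convex.

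The main obstacle is nothing more than this curvature computation, and it is entirely routine; the Cauchy--Schwarz step is exactly the scalar shadow of the general fact that the Hessian of log-sum-exp equals $\operatorname{diag}(p) - p p^{\top}$ with $p_k = w_k / W$, a covariance matrix and hence positive semidefinite. If one prefers the multivariate formulation needed later (where many dependency variables are tuned jointly in Section~\ref{sec:optimization}), the same argument goes through verbatim: each monomial becomes affine, the posynomial becomes a log-sum-exp of affine forms, and convexity follows by composing the convex log-sum-exp with an affine map. I would therefore close by invoking the affine-composition rule to lift the scalar computation to the vector-valued substitution actually used in the optimization problems.
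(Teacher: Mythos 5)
Your proof is correct; the paper itself does not prove this lemma but simply cites it to the geometric-programming tutorial of Boyd et al., and your log-sum-exp-of-affine-functions argument (with the Cauchy--Schwarz/covariance verification of nonnegative curvature) is precisely the standard proof given in that reference. Nothing is missing, and your closing remark about the affine-composition rule correctly covers the multivariate form actually needed in Section~\ref{sec:optimization}.
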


Besides the above mentioned numerical properties, there are developed algorithms~\cite{Boyd2007} to fit posynomials to real data, possibly taken from past management histories. For these reasons, in this paper, we place the following reasonable assumption on the cost functions. We assume that, for each $\sigma$, $i$, and $j$, there exists a posynomial $f^+_{\sigma, ij}$ such that the cost function~$f_{\sigma, ij}$ is of the form
\begin{equation*}
f_{\sigma, ij}(\Psi_{\sigma, ij}) = f^+_{\sigma, ij}(\Psi_{\sigma, ij}) - f^+_{\sigma, ij}(\Omega_{\sigma, ij}). 
\end{equation*}
The essential part of the cost function is the first term $f^+_{\sigma, ij}(\Psi_{\sigma, ij})$, while the second term~$-f^+_{\sigma, ij}(\Omega_{\sigma, ij})$ is for normalizing the cost function as $f_{\sigma, ij}(\Omega_{\sigma, ij}) = 0$, i.e., the zero investment yields the nominal interdependency matrix. Corresponding to the decomposition, let us define
\begin{equation*}
\begin{aligned}
C^+_{L} &= \sum_{\substack{(i, j)\colon \Omega_{L, ij}\neq 0 \\{\aand} i\neq j}} f^+_{L, ij}(\Psi_{L, ij}), 
\\
C^+_{S} &= \sum_{\substack{(i, j)\colon \Omega_{S, ij}\neq 0 \\{\aand} i\neq j}}
f^+_{S, ij}(\Psi_{S, ij}), 
\\
C^+_{LS} &= \sum_{(i, j)\colon\Omega_{LS, ij}\neq 0} f^+_{LS, ij}(\Psi_{LS, ij}), 
\\
C^+_{SL} &= \sum_{(i, j)\colon\Omega_{SL, ij}\neq 0} f^+_{SL, ij}(\Psi_{SL, ij})
\end{aligned}
\end{equation*}
and 
\begin{equation}\label{eq:def:Csigma-s}
\begin{aligned}
C^-_{L} &= \sum_{\substack{(i, j)\colon \Omega_{L, ij}\neq 0 \\{\aand} i\neq j}} f^+_{L, ij}(\Omega_{L, ij}), 
\\
C^-_{S} &= \sum_{\substack{(i, j)\colon \Omega_{S, ij}\neq 0 \\{\aand} i\neq j}}
f^+_{S, ij}(\Omega_{S, ij}), 
\\
C^-_{LS} &= \sum_{(i, j)\colon\Omega_{LS, ij}\neq 0} f^+_{LS, ij}(\Omega_{LS, ij}), 
\\
C^-_{SL} &= \sum_{(i, j)\colon\Omega_{SL, ij}\neq 0} f^+_{SL, ij}(\Omega_{SL, ij}). 
\end{aligned}
\end{equation}
Let
\begin{align}
C^+ &= C^+_L + C^+_S + C^+_{LS} + C^+_{SL}, \notag
\\
C^- &= C^-_L + C^-_S + C^-_{LS} + C^-_{SL}. \label{eq:def:C-}
\end{align}
Then, the total cost~\eqref{eq:totalCost} is rewritten as 
\begin{equation*}
C = C^+ - C^-. 
\end{equation*}

We are now ready to state the first main result of this paper. As in \eqref{eq:logV}, we introduce the auxiliary variable
\begin{equation*}\label{eq:transformation}
\Xi_{\sigma, ij} = \log \Psi_{\sigma,ij}
\end{equation*}
for all $\sigma$, $i$, and $j$. By using the celebrated convexity result of Kingman~\cite{Kingman1961}, we can show that the optimization problem~\eqref{eq:origOpt} for solving the budget-constrained dependency optimization problem can be transformed to a convex optimization problem, which can be efficiently and optimally solved by off-the-shelve softwares.

\begin{theorem}\label{thm:convexity:IDMpc}
Let $\Xi_{\sigma, ij}^\star$ be the solution of the optimization problem
\begin{subequations}\label{eq:origOptDMSIDM:pc}
\begin{align}
\minimize_{\Xi_{\sigma, ij }}\ \ \ \ \,& \log \rho(\mathcal M) \notag
\\
\subjectto\ \ \ \ 
&\Xi_{\sigma, ij} \leq \log \Omega_{\sigma, ij}, \label{eq:constupper}
\\
& \Xi_{\sigma, ij} \geq \log \epsilon + \log \Omega_{\sigma, ij}, \label{eq:constlower}
\\
& \log C^+\leq \log (B + C^-). \label{eq:constCost}
\end{align}
\end{subequations}
Then, the DSMs and IDMs defined by
\begin{equation}\label{eq:psiiotastar}
\Psi_{\sigma, ij}^\star = 
\begin{cases}
0,&\mbox{if $\Omega_{\sigma, ij} = 0$, }
\\
\exp(\Xi_{\sigma, ij}^\star),& \mbox{otherwise}, 
\end{cases}
\end{equation}
solve the budget-constrained dependency optimization problem. Moreover, if the cost functions $f_{\sigma, ij}$ are posynomial, then the optimization problem \eqref{eq:origOptDMSIDM:pc} is convex. 
\end{theorem}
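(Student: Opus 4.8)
The plan is to establish the two assertions separately: first that the reformulation \eqref{eq:origOptDMSIDM:pc} is equivalent to the original budget-constrained problem \eqref{eq:origOpt}, and second that, under the posynomial assumption, \eqref{eq:origOptDMSIDM:pc} is convex.

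For the equivalence, I would exploit that the map $\Psi_{\sigma, ij}\mapsto \Xi_{\sigma, ij} = \log \Psi_{\sigma, ij}$ is a strictly increasing bijection between the positive reals and $\mathbb{R}$, so it preserves both feasibility and optimality. Concretely, taking logarithms of the box constraints \eqref{eq:Psi:upper} and \eqref{eq:Psi:lower} reproduces exactly \eqref{eq:constupper} and \eqref{eq:constlower}. For the budget constraint, I would use the decomposition $C = C^+ - C^-$ in which $C^-$ is a constant determined entirely by the nominal matrices (see \eqref{eq:def:C-}); hence $C\le B$ is equivalent to $C^+\le B + C^-$ and, both sides being positive, to \eqref{eq:constCost} after taking logarithms. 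Since $\log$ is strictly increasing, minimizing $\rho(\mathcal M)$ and minimizing $\log\rho(\mathcal M)$ have the same minimizers, while the zero dependencies, which cannot be improved, are handled separately through the definition \eqref{eq:psiiotastar}. Thus any optimizer $\Xi^\star$ of \eqref{eq:origOptDMSIDM:pc} yields, via \eqref{eq:psiiotastar}, an optimizer of \eqref{eq:origOpt}.

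For convexity, I would check the objective and each constraint in the variables $\Xi_{\sigma, ij}$. The constraints \eqref{eq:constupper} and \eqref{eq:constlower} are affine in $\Xi$, hence convex. For \eqref{eq:constCost}, I would observe that each $f^+_{\sigma, ij}$ is a posynomial, so $f^+_{\sigma, ij}(\exp \Xi_{\sigma, ij})$ is a sum of exponentials of affine functions of $\Xi_{\sigma, ij}$; summing over all indices, $C^+$ is again such a sum, so $\log C^+$ is a log-sum-exp function of $\Xi$ and therefore convex (this is the multivariate form of Lemma \ref{lem:llconvexity}). As $\log(B + C^-)$ is constant, \eqref{eq:constCost} is a convex constraint.

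The heart of the argument, and the main obstacle, is the convexity of the objective $\log\rho(\mathcal M)$. Here I would trace how the decision variables enter $\mathcal M$. By \eqref{eq:WLij} and \eqref{eq:WLSijWSLij}, every tunable entry of the WTMs is a monomial $c\,\Psi_{\sigma, ij}$ with a positive constant $c$ coming from the fixed work-completion coefficients, while the remaining entries of $A_1$ and $A_2$ are nonnegative constants. Since the matrix powers $A_2^{h-1}$ and the product in \eqref{eq:complexMat} involve only sums and products of entries, and since products of monomials are monomials while sums of monomials are posynomials, every entry of $\mathcal M$ is a posynomial in $\{\Psi_{\sigma, ij}\}$ with nonnegative coefficients; in particular $\mathcal M$ is entrywise nonnegative. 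Applying Lemma \ref{lem:llconvexity} entrywise then shows that each entry of $\mathcal M$, viewed as a function of $\Xi$, is log-convex. Finally I would invoke Kingman's theorem \cite{Kingman1961}, which states that the spectral radius of a nonnegative matrix with log-convex entries is itself log-convex in the underlying parameter; this gives convexity of $\log\rho(\mathcal M)$ in $\Xi$ and completes the argument. The delicate point to get right is the bookkeeping confirming that the posynomial and nonnegativity structure survives the matrix powers and products defining $\mathcal M$, since this is precisely what licenses the entrywise use of Lemma \ref{lem:llconvexity} and, in turn, Kingman's result.
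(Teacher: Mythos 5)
Your proposal is correct and follows essentially the same route as the paper's proof: transform to log-variables, use the decomposition $C = C^+ - C^-$ to handle the budget constraint, invoke Lemma~\ref{lem:llconvexity} for the log-convexity of $C^+$ and of each entry of $\mathcal M$ (after noting that the entries are posynomials in the dependency variables via \eqref{eq:WLij}, \eqref{eq:WLSijWSLij}, and \eqref{eq:complexMat}), and conclude with Kingman's convexity result (Proposition~\ref{prop:Kingman}). Your write-up is simply more explicit than the paper's about the monomial form of the WTM entries and the preservation of the posynomial structure under matrix products, but no step or idea differs.
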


\color{black}Theorem~\ref{thm:convexity:IDMpc} serves as an analytical decision support tool for PD managers. Specifically, the theorem allows PD managers to efficiently solve the budget-constrained dependency optimization problem (Problem~\ref{prb:}) by using off-the-shelve software for convex optimization such as \texttt{fmincon} routine in the MATLAB programming language or the \texttt{CVXOPT} package for the Python programming language. This feature distinguishes the proposed framework from others relying on heuristic or non-convex optimization procedures~\cite{Peteghem2010,Alcaraz2003,Boctor1996,Patterson1990} for the following two reasons. On the one hand, since the above solvers always provide  globally optimal solutions, PD managers do not need to worry about the optimality of their investments. On the other hand, since the computational cost for solving the problem is relatively small (polynomial in the size of the project), PD managers can quickly assess and, if necessary, re-design their PD process even when their project is quite large. 

In order to prove Theorem~\ref{thm:convexity:IDMpc}, we state the following celebrated result by Kingman~\cite{Kingman1961}: 

\begin{proposition}[{\cite{Kingman1961}}]\label{prop:Kingman}
Let $\mathcal M$ be a square matrix function in positive variables $x_1$, \dots, $x_N$. Assume that the logarithm of each entry in the matrix~$\mathcal M$ is convex in the variables $x_1$, \dots, $x_N$. Then, $\log \rho(\mathcal M)$, as a function of the variables $x_1$, \dots, $x_N$, is convex. 
\end{proposition}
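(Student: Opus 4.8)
The plan is to prove the log-convexity of the spectral radius directly from two ingredients: a limit representation of $\rho(\mathcal M)$ in terms of the entry sums of matrix powers, together with the fact that the class of log-convex functions is closed under both multiplication and addition. Since each entry $\mathcal M_{ij}$ has a convex logarithm, every entry is strictly positive, so $\mathcal M$ is an entrywise positive matrix and its spectral radius is positive by the Perron--Frobenius theorem. I would begin by invoking Gelfand's formula for the submultiplicative entrywise $\ell_1$ norm $\lVert A\rVert = \sum_{i,j}\lvert A_{ij}\rvert$, which gives
\begin{equation*}
\rho(\mathcal M) = \lim_{k\to\infty}\Bigl(\sum_{i,j}(\mathcal M^{k})_{ij}\Bigr)^{1/k},
\end{equation*}
the absolute values being unnecessary because $\mathcal M$, and hence every power $\mathcal M^{k}$, is nonnegative. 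Taking logarithms, the target reduces to showing that $x \mapsto \log \sum_{i,j}(\mathcal M^{k}(x))_{ij}$ is convex for every fixed $k$, after which the scaling by $1/k$ and the limit are handled at the very end.

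The heart of the argument is the closure of log-convexity under the two algebraic operations that build $\sum_{i,j}(\mathcal M^{k})_{ij}$ out of the entries $\mathcal M_{ij}$. Closure under products is immediate: if $\log f$ and $\log g$ are convex, then $\log(fg) = \log f + \log g$ is convex as a sum of convex functions. Closure under sums is the genuine obstacle, and it is where Hölder's inequality enters. Given log-convex $f$ and $g$ and $\lambda \in [0,1]$, log-convexity yields the midpoint bounds $f(\lambda x + (1-\lambda)y) \le f(x)^{\lambda} f(y)^{1-\lambda}$ and similarly for $g$; applying Hölder's inequality with conjugate exponents $p = 1/\lambda$ and $q = 1/(1-\lambda)$ to the resulting two-term sum then gives
\begin{equation*}
f(x)^{\lambda} f(y)^{1-\lambda} + g(x)^{\lambda} g(y)^{1-\lambda} \le \bigl(f(x)+g(x)\bigr)^{\lambda}\bigl(f(y)+g(y)\bigr)^{1-\lambda},
\end{equation*}
which, combined with the midpoint bounds above, is precisely the statement that $f+g$ is log-convex. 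I expect this step---recognizing that the correct Hölder pairing converts the midpoint bound into the desired log-convexity inequality---to be the main obstacle, with the remainder being bookkeeping.

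With the two closure properties in hand, the conclusion follows by structural induction on the power $k$. Writing
\begin{equation*}
(\mathcal M^{k})_{ij} = \sum_{i_{1},\dots,i_{k-1}} \mathcal M_{i i_{1}}\mathcal M_{i_{1} i_{2}}\cdots \mathcal M_{i_{k-1} j},
\end{equation*}
each summand is a product of $k$ log-convex entries, hence log-convex by the product rule, and the sum over the internal indices is log-convex by the sum rule; summing once more over $i$ and $j$ shows that $S_{k}(x) := \sum_{i,j}(\mathcal M^{k}(x))_{ij}$ is log-convex, i.e.\ $\log S_{k}$ is convex. Finally, the Gelfand representation gives $\log \rho(\mathcal M) = \lim_{k\to\infty}\tfrac{1}{k}\log S_{k}$, and since each $\tfrac{1}{k}\log S_{k}$ is convex and a pointwise limit of convex functions is convex, $\log \rho(\mathcal M)$ is convex, as claimed. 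The only point requiring mild care is the presence of structural zeros in the matrices arising in our application; these are harmless, because a zero entry remains zero under the products above and contributes an identically vanishing, hence log-convex in the extended sense, term to each sum, so the argument applies verbatim to the relevant zero pattern.
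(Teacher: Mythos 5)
The paper does not prove this proposition at all---it imports it verbatim from Kingman's 1961 note as a cited black box---so the only meaningful comparison is with Kingman's original argument, and your proof is essentially exactly that: closure of log-convexity under products (trivially) and sums (via H\"older with exponents $1/\lambda$ and $1/(1-\lambda)$), applied to the monomial expansion of the entries of $\mathcal M^{k}$, followed by the Gelfand-type limit $\log\rho(\mathcal M)=\lim_{k\to\infty}\tfrac{1}{k}\log S_{k}$ and the fact that pointwise limits of convex functions are convex. The argument is correct, including your closing remark on structural zeros, which is the relevant case for the paper's application where entries are either posynomials (hence strictly positive) or identically zero, so the zero pattern is constant in $x$ and each vanishing term is log-convex in the extended sense; the only cosmetic wrinkle is that your opening claim that every entry is strictly positive (and the appeal to Perron--Frobenius) is superseded by that closing remark and could simply be dropped.
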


We can now prove Theorem~\ref{thm:convexity:IDMpc}: \color{black}

\begin{proof}
It is easy to see that the dependency constraints \eqref{eq:Psi:upper}, \eqref{eq:Psi:lower} and the budget constraint~\eqref{eq:buggetConst:DSM} in the optimization problem~\eqref{eq:origOpt} are equivalent to the constraints \eqref{eq:constupper}, \eqref{eq:constlower}, and \eqref{eq:constCost}, respectively. 

Therefore, to prove Theorem~\ref{thm:convexity:IDMpc}, it is enough to show that the quantities~$\log C^+$ and~$\log \rho(\mathcal M)$ are convex with respect to the variables~$\Xi_{\sigma, ij}$ ($\sigma \in \{L, S, LS, SL\}$ and $1\leq i, j \leq m$). To show the convexity of~$\log C^+$, notice that $C^+$ is a posynomial in the dependency variables 
\begin{equation}\label{eq:dependecnyVars}
\{ \Psi_{\sigma, ij}: (\sigma \in \{LS, SL\}) \mbox{ or } (i\neq j \aand \sigma\in \{L, S\})  \}
\end{equation}
by our assumption that the cost function~$f^+_{\sigma, ij}(\Psi_{\sigma, ij})$ is a posynomial for all $\sigma$, $i$, and $j$. Therefore, Lemma~\ref{lem:llconvexity} shows the convexity of~$\log C^+$ with respect to the variables~$\Xi_{\sigma, ij}$. 

Let us show the convexity of~$\log \rho(\mathcal M)$. By equation~\eqref{eq:complexMat}, each entry of the matrix~$\mathcal M$ is a posynomial in the elements of the WTMs. Moreover, by \eqref{eq:WLij} and \eqref{eq:WLSijWSLij}, each element of the WTMs is a posynomial in the dependency variables~\eqref{eq:dependecnyVars}. Therefore, since a composition of posynomials is a posynomial, each entry of the matrix~$\mathcal M$ is a posynomial in the variables~\eqref{eq:dependecnyVars}. Therefore, Lemma~\ref{lem:llconvexity} shows that the logarithm of each entry of~$\mathcal M$ is convex with respect to the variables~$\Xi_{\sigma, ij}$. Hence, Proposition~\ref{prop:Kingman} concludes the convexity of~$\log \rho(\mathcal M)$ with respect to the variables~$\Xi_{\sigma, ij}$, as desired.
\end{proof}

Our second main result states that the performance-constrained dependency optimization problem can be also transformed to a convex optimization problem:

\begin{theorem}\label{thm:convexity:IDMbc}
Let $\Xi_{\sigma, ij}^\star$ be the solution of the optimization problem 
\begin{equation}\label{eq:origOptDMSIDM:bc}
\begin{aligned}
\minimize_{\Xi_{\sigma, ij}}\ \ \ \ \,& \log C^+
\\
\subjectto\ \ \ \ 
&\Xi_{\sigma, ij} \leq \log \Omega_{\sigma, ij}, 
\\
& \Xi_{\sigma, ij} \geq \log \epsilon + \log \Omega_{\sigma, ij}, 
\\
& \log \rho(\mathcal M) \leq \log r. 
\end{aligned}
\end{equation}
Then, the DSMs and IDMs defined by \eqref{eq:psiiotastar} solve the performance-constrained dependency optimization problem. Moreover, if the cost functions $f_{\sigma, ij}$ are posynomial, then the optimization problem \eqref{eq:origOptDMSIDM:bc} is a convex program.
\end{theorem}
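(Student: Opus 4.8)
The plan is to reduce Theorem~\ref{thm:convexity:IDMbc} to the machinery already assembled in the proof of Theorem~\ref{thm:convexity:IDMpc}, since the two problems differ only by exchanging the objective and a constraint. First I would establish that the original performance-constrained problem~\eqref{eq:performanceOPT} is equivalent, under the substitution $\Xi_{\sigma, ij} = \log \Psi_{\sigma, ij}$, to the transformed problem~\eqref{eq:origOptDMSIDM:bc}. The two box constraints~\eqref{eq:Psi:upper} and~\eqref{eq:Psi:lower} translate into the affine constraints $\Xi_{\sigma, ij} \leq \log \Omega_{\sigma, ij}$ and $\Xi_{\sigma, ij} \geq \log \epsilon + \log \Omega_{\sigma, ij}$ exactly as before, by taking logarithms and using the monotonicity of $\log$. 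The performance requirement~\eqref{eq:performanceConst}, namely $\rho(\mathcal M) \leq r$, becomes $\log \rho(\mathcal M) \leq \log r$ by the same monotonicity.

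For the objective, I would note that the total cost~\eqref{eq:totalCost} decomposes as $C = C^+ - C^-$, in which $C^-$ (defined in~\eqref{eq:def:C-}) depends only on the fixed nominal values $\Omega_{\sigma, ij}$ and is therefore constant with respect to the decision variables. Hence minimizing $C$ is equivalent to minimizing $C^+$, and since $C^+ > 0$ and $\log$ is strictly increasing, this is in turn equivalent to minimizing $\log C^+$. This chain of equivalences shows that a minimizer of~\eqref{eq:origOptDMSIDM:bc} yields, through the recovery formula~\eqref{eq:psiiotastar}, a solution of the performance-constrained dependency optimization problem (Problem~\ref{prb:performance:DSM}).

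It then remains to verify that~\eqref{eq:origOptDMSIDM:bc} is a convex program whenever the cost functions are posynomial, and here I can borrow the two convexity facts proved for Theorem~\ref{thm:convexity:IDMpc} verbatim. The objective $\log C^+$ is convex in the variables $\Xi_{\sigma, ij}$ because $C^+$ is a posynomial in the dependency variables and Lemma~\ref{lem:llconvexity} applies. The first two constraints are affine, hence convex. The only genuinely nontrivial ingredient is the performance constraint, but $\log \rho(\mathcal M)$ was already shown to be convex in $\Xi_{\sigma, ij}$ via Kingman's result (Proposition~\ref{prop:Kingman}), using that each entry of $\mathcal M$ in~\eqref{eq:complexMat} is a posynomial in the dependency variables; consequently $\{\Xi : \log \rho(\mathcal M) \leq \log r\}$ is a convex sublevel set and the constraint is convex.

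I do not expect a substantive obstacle: the core analytical work---the log-convexity of the spectral radius---is inherited intact from the previous theorem, so this proof is essentially a repackaging in which objective and constraint trade places. The only point requiring minor care is the boundary value $r = 0$, for which $\log r$ is undefined; I would handle this by restricting attention to $r \in (0, 1)$, which is the regime of practical interest since $r = 0$ would demand $\rho(\mathcal M) = 0$.
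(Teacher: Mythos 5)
Your proposal is correct and follows essentially the same route as the paper: the paper likewise proves Theorem~\ref{thm:convexity:IDMbc} by observing that the constraints translate exactly as in Theorem~\ref{thm:convexity:IDMpc}, that $C^-$ is constant so minimizing $C$ reduces to minimizing $\log C^+$, and that the convexity of $\log C^+$ and $\log\rho(\mathcal M)$ carries over verbatim. Your extra remark about excluding $r=0$ is a sensible refinement the paper leaves implicit.
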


\begin{proof}
We can prove Theorem~\ref{thm:convexity:IDMbc} in the same way as in the proof of Theorem~\ref{thm:convexity:IDMpc}. The constraints~\eqref{eq:Psi:upper}, \eqref{eq:Psi:lower}, and~\eqref{eq:performanceConst} are obviously equivalent to the constraints in the optimization problem~\eqref{eq:origOptDMSIDM:bc}. Also, since $C^-$ is given by equations~\eqref{eq:def:Csigma-s} and~\eqref{eq:def:C-} and, therefore, is a constant, the minimization of $C$ performed in the optimization problem~\eqref{eq:performanceOPT} is equivalent to minimizing $\log C^+$ as in the optimization problem~\eqref{eq:origOptDMSIDM:bc}. This completes the proof of Theorem~\ref{thm:convexity:IDMbc}.
\end{proof}

\section{Case study: automotive appearance design}\label{sec:caseStudy}

In this section, we illustrate our optimization framework presented in the last section by using a real PD process reported in~\cite{McDaniel1996}. In Section~\ref{sec:overview}, we give an overview of the nominal automotive appearance design process. In Section~\ref{sec:resource}, we introduce the cost function used in this case study and also give the baseline resource allocation strategy based on the description in~\cite{Yassine2003}. We illustrate the effectiveness of our optimization framework in Section~\ref{sec:results}.

\begin{figure*}[tb]
\centering
\includegraphics[width=.95\tablewidth]{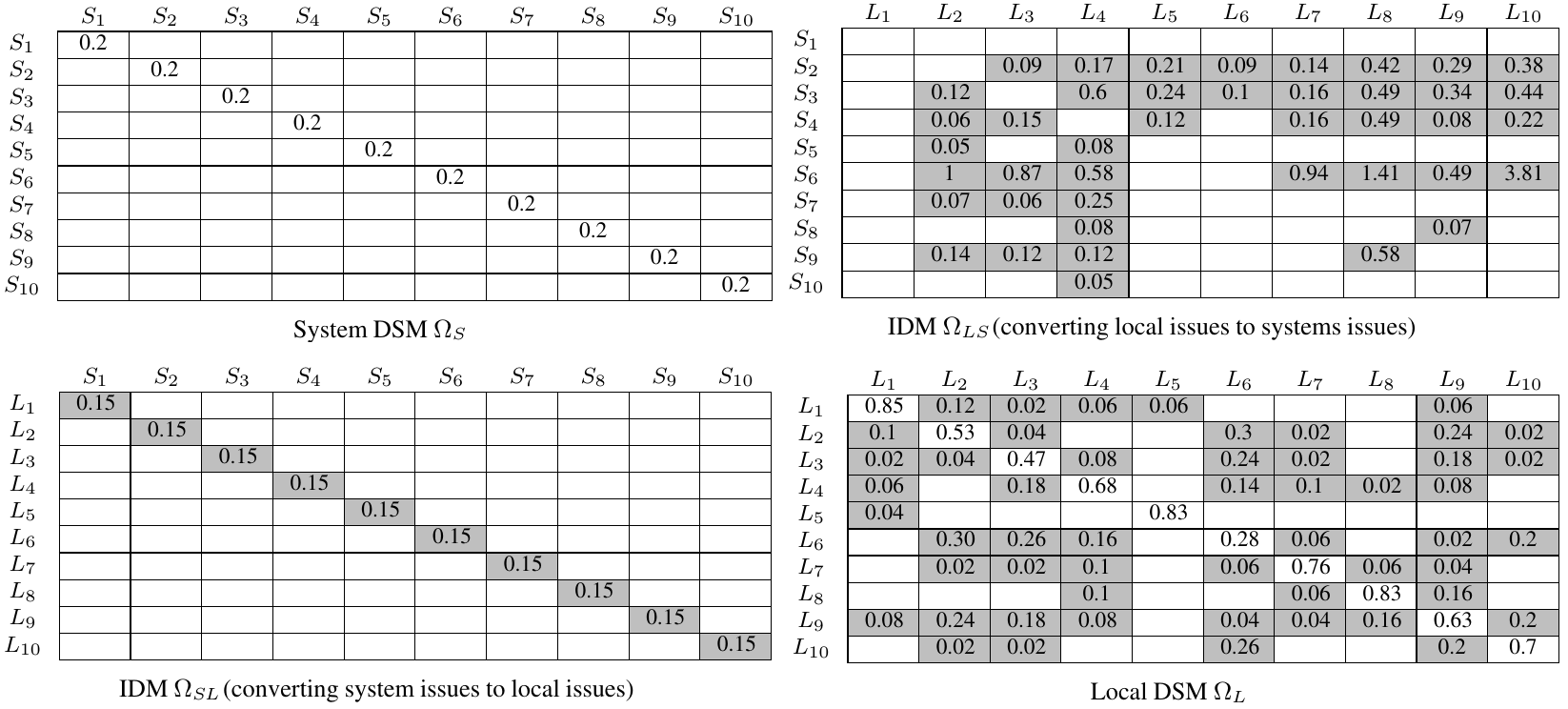}
\caption{{The nominal DSMs and IDMs of the automotive appearance design. The inter-component and inter-team dependencies \eqref{eq:dependencies} that can be weakened by the manager are highlighted with the gray color.}}
\label{table:DSMIDM}
\end{figure*}

\subsection{Automotive appearance design}\label{sec:overview}

The automotive appearance design process reported in \cite{McDaniel1996} (and further investigated in~\cite{Yassine2003}) is a part of an automobile PD process and refers to the process of designing all interior and exterior automobile surfaces for better appearance, surface quality, and operational interface. The authors in~\cite{Yassine2003} focused on the following pair of the system and local teams; the engineering (local) team responsible for the feasibility of designs, and the styling (system) team responsible for the appearance of the vehicle. Information exchanges occur not only on the cross-functional level but also within functional groups working with specific tasks on appearance design. The tasks are 1) carpet, 2) center console, 3) door trim panel, 4) garnish trim, 5) overhead system, 6) instrument panel, 7) luggage trim, 8) package tray, 9) seats, and 10) steering wheel. For the sake of completeness, we include the values of the DSMs and IDMs in Fig.~\ref{table:DSMIDM}. 

During the project period, there occur two different types of information exchanges between the teams. One is a weekly feasibility meeting, where the engineering team feedbacks to the styling team on infeasible design conditions. The other ones are in terms of CAD data from the styling team to the engineering team and are scheduled to be roughly six-week intervals. In this paper, we consider the situation where the schedule of the meeting can be either brought forward or postponed at most two weeks due to random and unexpected circumstances. In order to realize this situation, we set the minimum and the maximum interval of feedbacks as $h_{\min} = 4$ and $h_{\max} = 8$.

\begin{figure}[tb]
\centering
\includegraphics[height=\figheight]{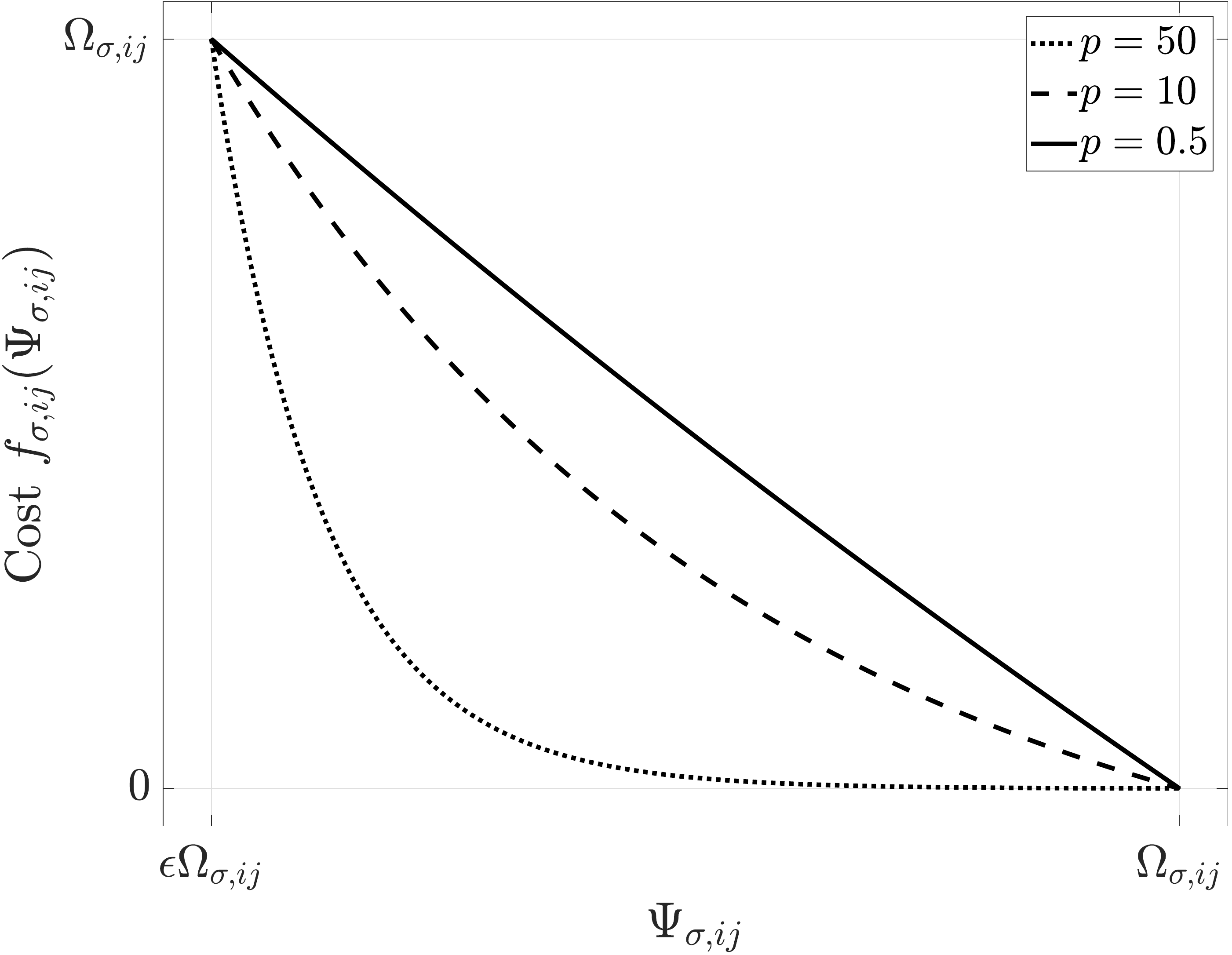}
\caption{Cost function \eqref{eq:costFunction} with $\epsilon = 0.85$ and $p = 50$, $10$, and $0.5$. No investment (i.e., $f_{\sigma, ij}(\Psi_{\sigma, ij}) = 0$) yields the nominal dependency $\Omega_{\sigma, ij}$. The maximum investment ($\Omega_{\sigma, ij}$ monetary unit) yields the fully improved dependency $\epsilon \Omega_{\sigma, ij}$. }
\label{fig:cost}
\end{figure}

\subsection{Resource allocation strategies}\label{sec:resource}

In the nominal DSMs and IDMs shown in Fig.~\ref{table:DSMIDM}, we identify 104 dependencies of the form~\eqref{eq:dependencies}. Since the DSM~$\Omega_S$ is diagonal and, therefore, does not contain dependency terms, we do not consider tuning the values of~$\Omega_S$ in this case study.

We consider the following requirements on the cost functions in this case study. 
\begin{enumerate}
\item The cost function~$f_{\sigma, ij}$ is decreasing, that is, the more we invest, the weaker dependencies become. 
\item In order to achieve the full improvement and set the dependency to $\epsilon\Omega_{\sigma, ij}$, we need to pay the cost~$\Omega_{\sigma, ij}$, that is, 
\begin{equation}\label{eq:fullInvestment}
f_{\sigma, ij}(\epsilon\Omega_{\sigma, ij}) = \Omega_{\sigma, ij}. 
\end{equation}
\end{enumerate}
The second requirement in particular implies that the cost for the full improvement is proportional to the strength of the nominal dependencies. In order to realize this situation, we use the following cost function: 
\begin{equation}\label{eq:costFunction}
f_{\sigma, ij}(\Psi_{\sigma, ij}) = c_{\sigma, ij}\left(
\frac{1}{(\Psi_{\sigma, ij})^{p}} - \frac{1}{({\Omega\mathstrut}_{\sigma, ij})^{p}}
\right), 
\end{equation} 
where $p$ is a parameter that should be appropriately set by the manager, and the constant~$c_{\sigma, ij}$ is uniquely determined to satisfy the equation~\eqref{eq:fullInvestment}. Also, we use a positive $p$ to satisfy the first requirement. In Fig.~\ref{fig:cost}, we show the shape of the cost function for various values of~$p$. {We see that, by tuning the value of $p$, we can recover the following two practical improvement regimes that appear in realistic PD process improvements. When $p$ is small, we obtain  the regime of linear improvement where the magnitude of the improvement, $\Omega_{\sigma, ij} - \Psi_{\sigma, ij}$, is proportional to the allocated resource. On the other hand, a large $p$ yields the regime of diminishing returns (see, e.g., \cite{Chen2012}), where the incremental elimination of the dependency decreases as we allocate more resource.}

{To set the baseline strategy for resource distribution, we recall the strategy that was found to be most effective among the other strategies that were proposed by the authors in~\cite{Yassine2003}. In this reference, the tasks~$L_2$, $L_3$, and $L_6$ (i.e., the tasks for the center console, door trip panel, and instrument panel within the local team) were identified as ``complex local components''. Then, the authors proposed that the project manager should focus on weakening the dependencies between these three local tasks and other tasks. Therefore, in terms of the DSMs and IDMs, it was recommended that we weaken the following dependencies:
\begin{equation}\label{eq:shouldbe}
\begin{multlined}
\Omega_{L, i2},\,\Omega_{L, i3},\,\Omega_{L, i6},\,\Omega_{L, 2j},\,\Omega_{L, 3j},\,\Omega_{L, 6j},\\
\Omega_{LS, i2},\,\Omega_{LS, i3},\,\Omega_{LS, i6},\,\Omega_{SL, 2j},\, \Omega_{SL, 3j},\,\Omega_{SL, 6j}, 
\end{multlined}
\end{equation}
where $i, j = 1, \dotsc, 10$. We choose to implement this recommendation in the following manner. In our implementation, we distribute the whole resource to the dependencies in \eqref{eq:shouldbe} in such a way that the distributed resources are proportional to the strength of the nominal dependencies. Hence, in the baseline strategy, we distribute no resources to the dependencies that are not listed in \eqref{eq:shouldbe}.}

\begin{figure*}[tb]
\begin{minipage}[t]{.475\linewidth}
\centering
\includegraphics[clip,trim=2.42in 2.709in 2.343in 2.812in,width=\tablewidth]{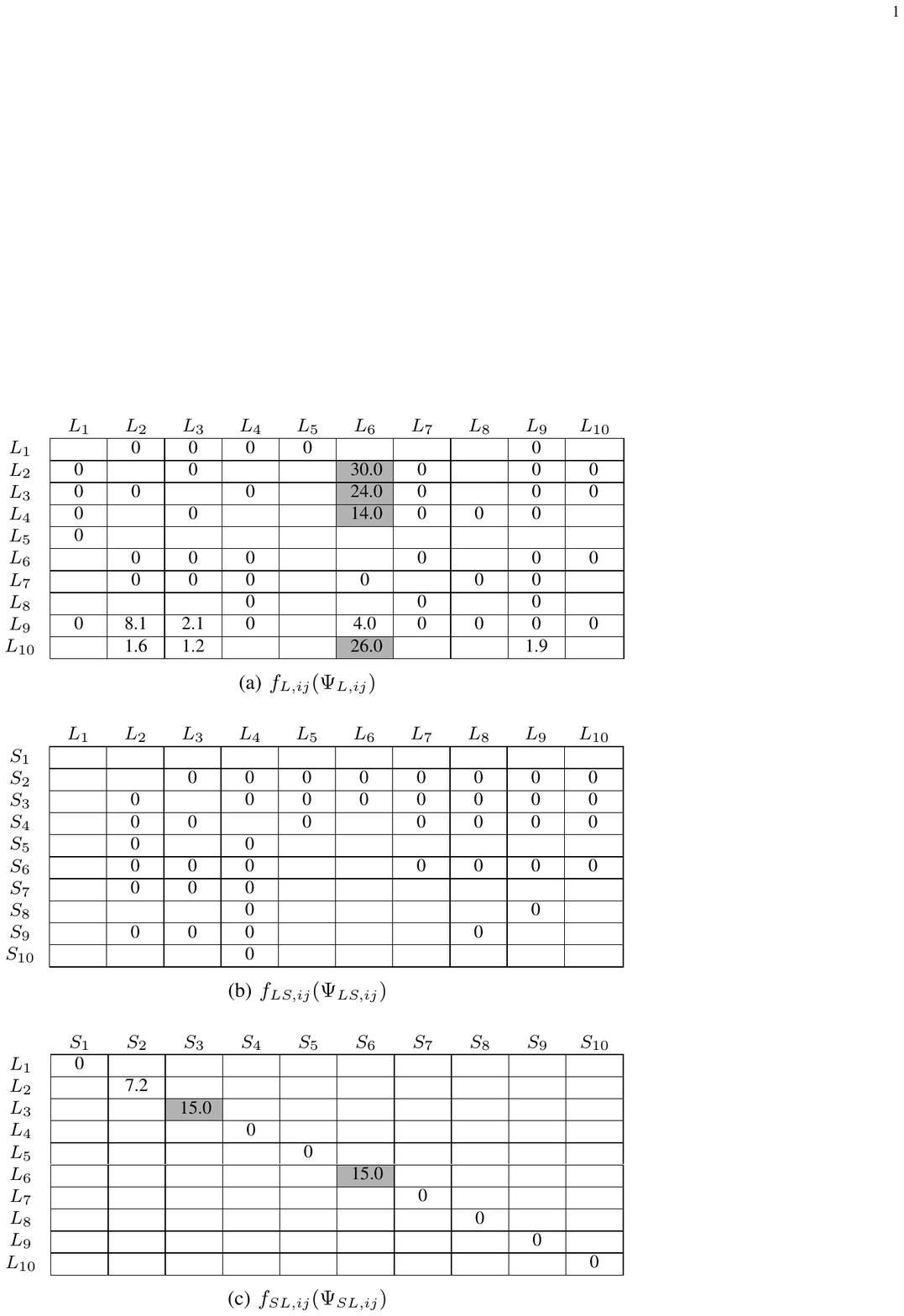}
\caption{{Proposed investments on dependencies (multiplied by 100). The top six investments are highlighted with the gray color.}}
\label{table:optInvestments}
\end{minipage}
\hfil
\begin{minipage}[t]{.475\linewidth}
\centering
\includegraphics[clip,trim=2.42in 2.709in 2.343in 2.812in,width=\tablewidth]{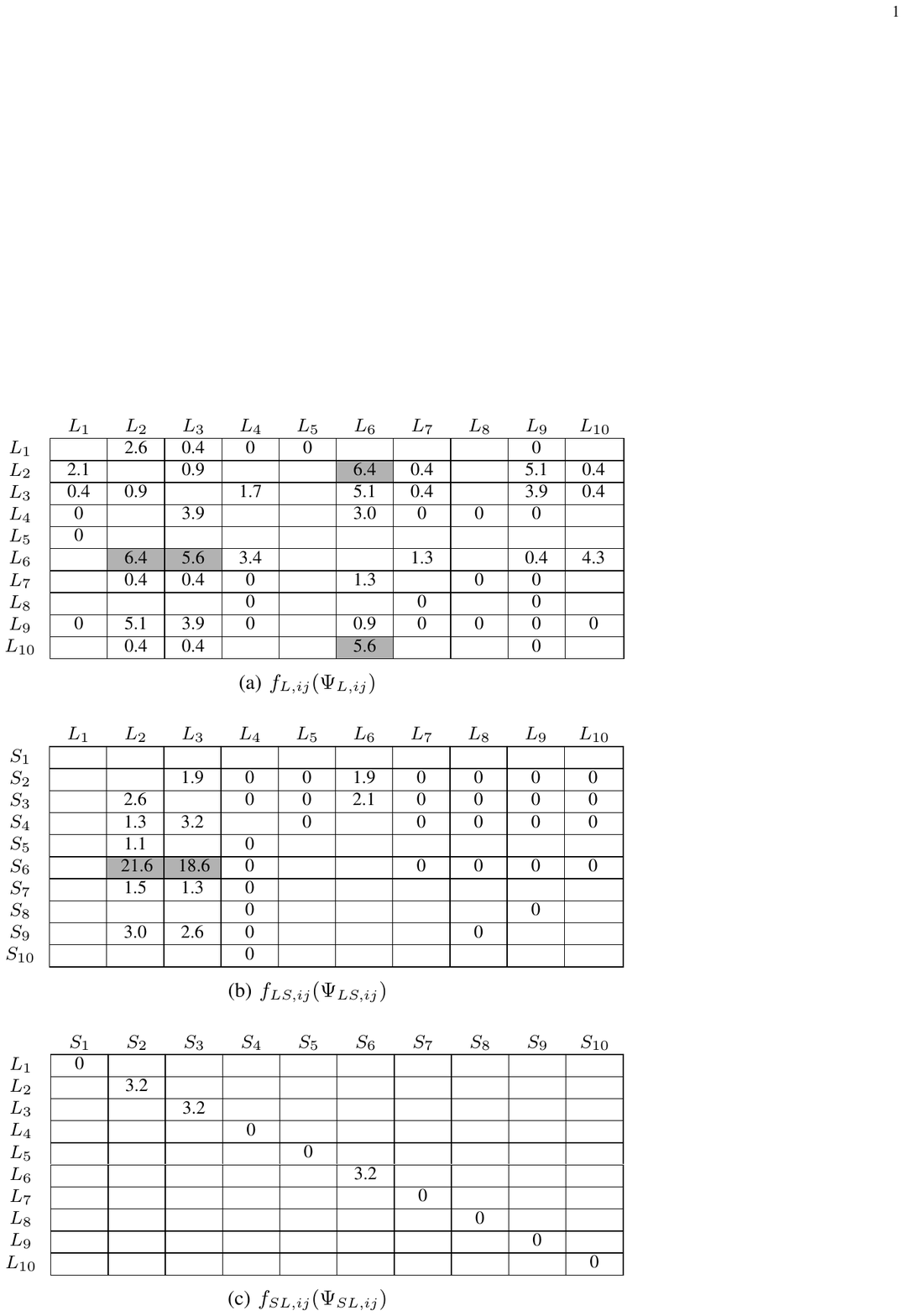}
\caption{{Baseline investments on dependencies (multiplied by 100). The top six investments are highlighted with the gray color.}}
\vspace{3mm}
\label{table:conInvestments}
\end{minipage}
\color{\changesColor}
\centering
\includegraphics[width=.9\linewidth]{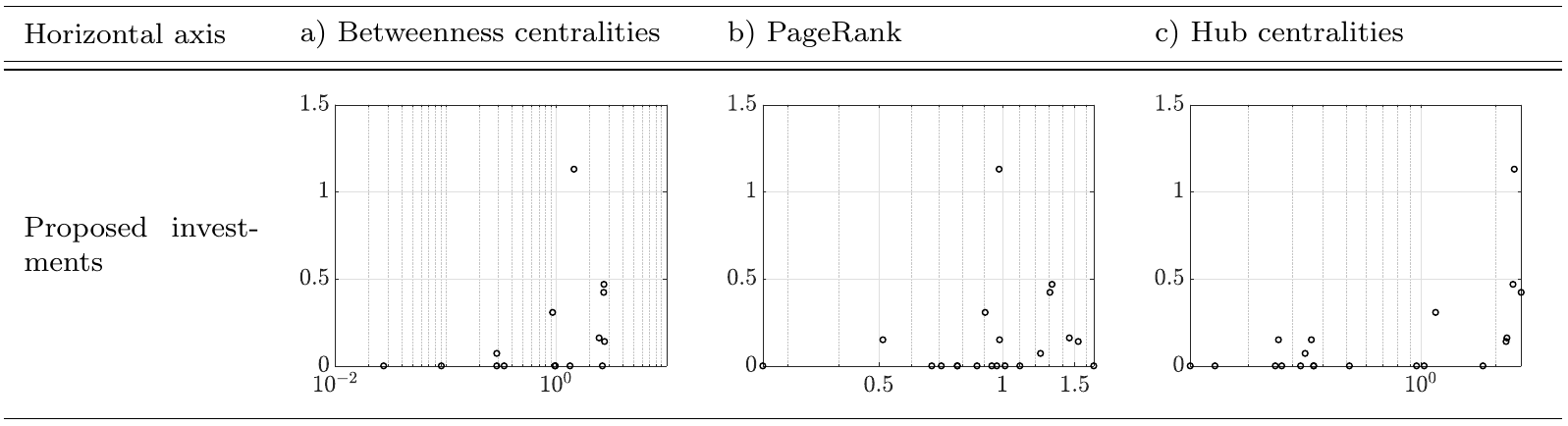}
\caption{Optimal investments versus centrality measures in the automotive appearance design}
\label{table:autoInvestments}
\end{figure*}

\begin{figure}[tb]
\centering
\includegraphics[height=\figheight]{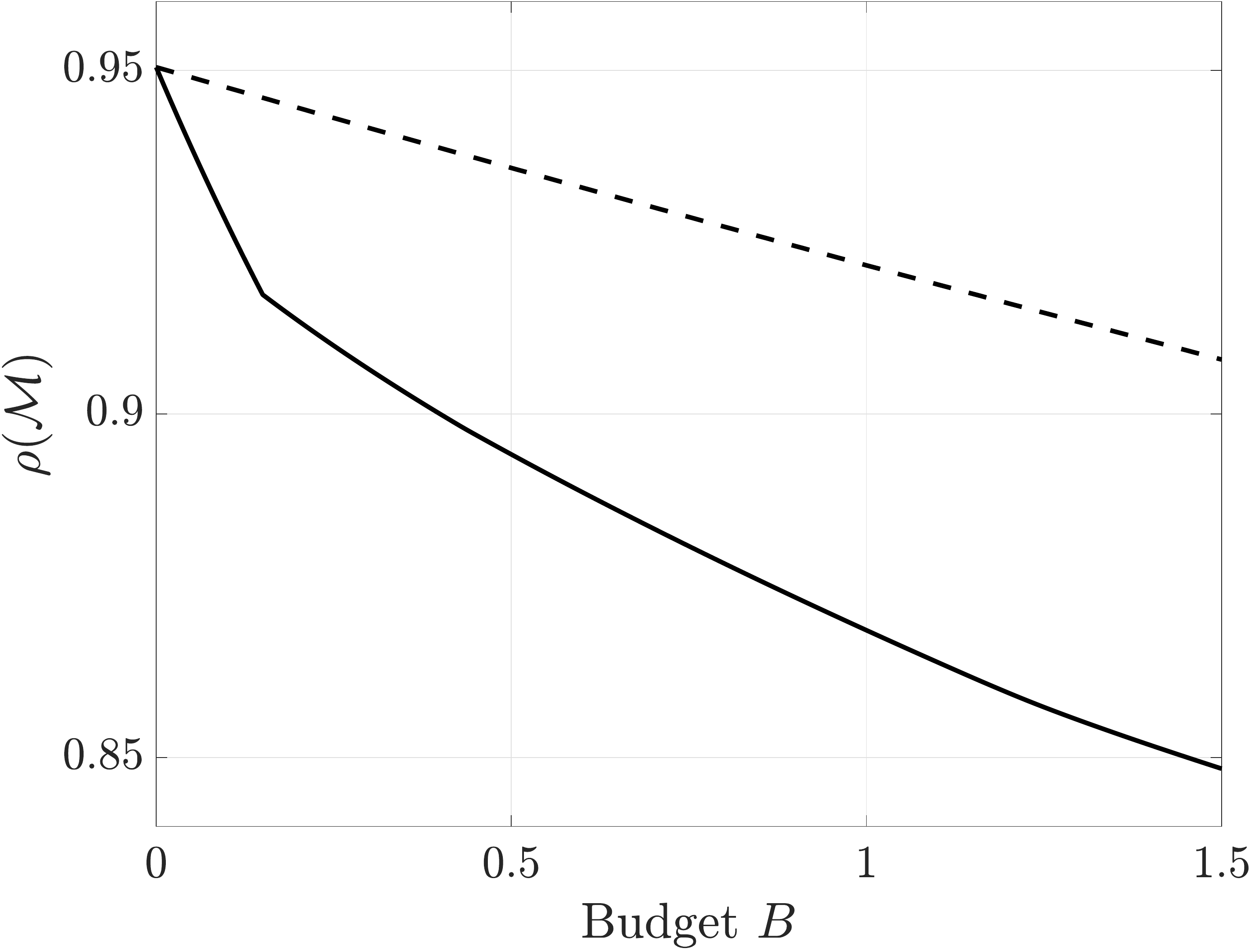}
\caption{Performances of the baseline and proposed strategies for various values of the budget~$B$. Solid line: proposed strategy. Dashed line: baseline strategy.}
\label{fig:changebudget}
\vspace{3mm}
\includegraphics[height=\figheight]{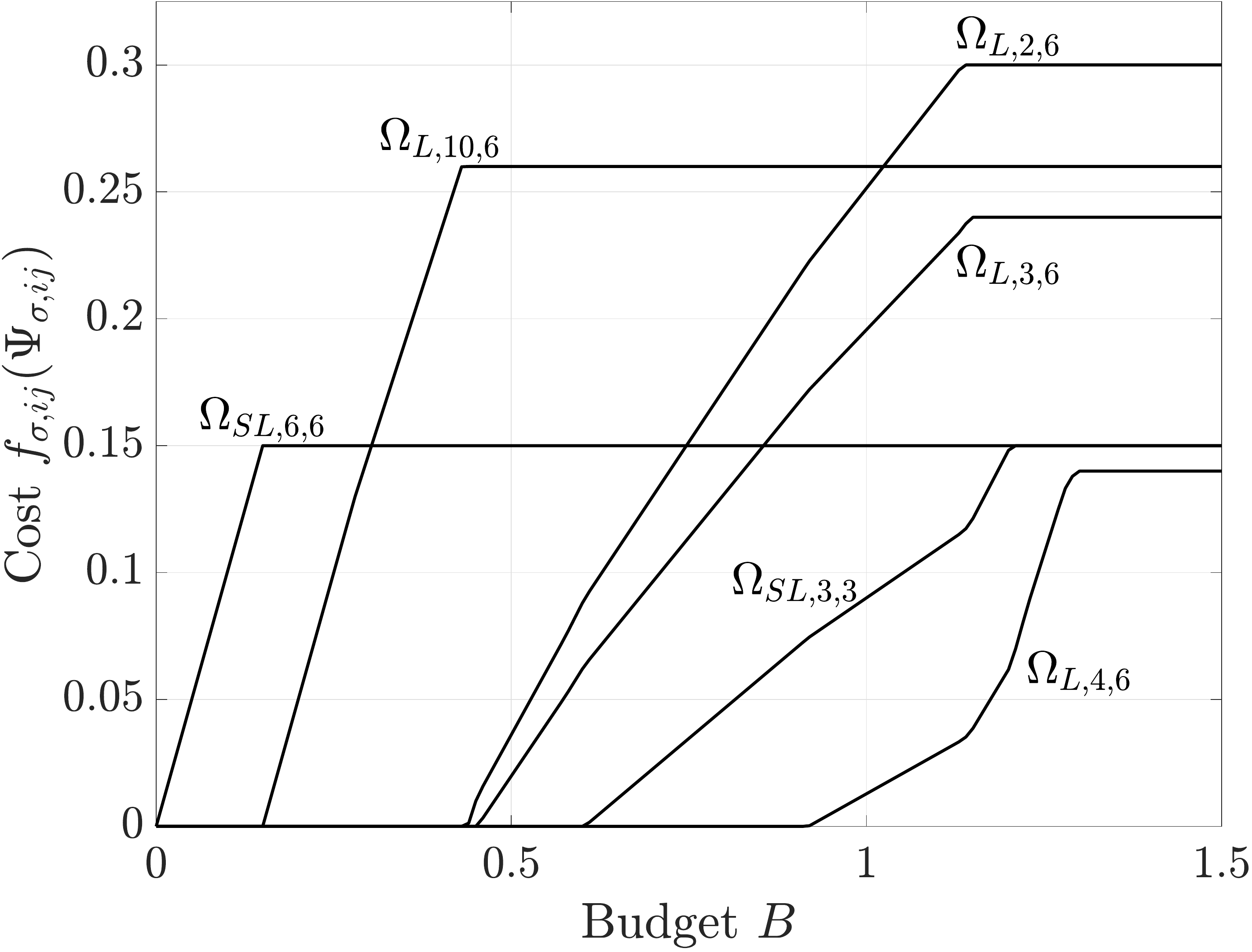}
\caption{{Investments on the dependencies versus the budget~$B$. We show the investments on the dependencies that receive the top six investments in Fig.~\ref{table:optInvestments}.}}
\label{fig:changebudgeteach}
\end{figure}

\begin{figure}[tb]
\centering
\includegraphics[height=\figheight]{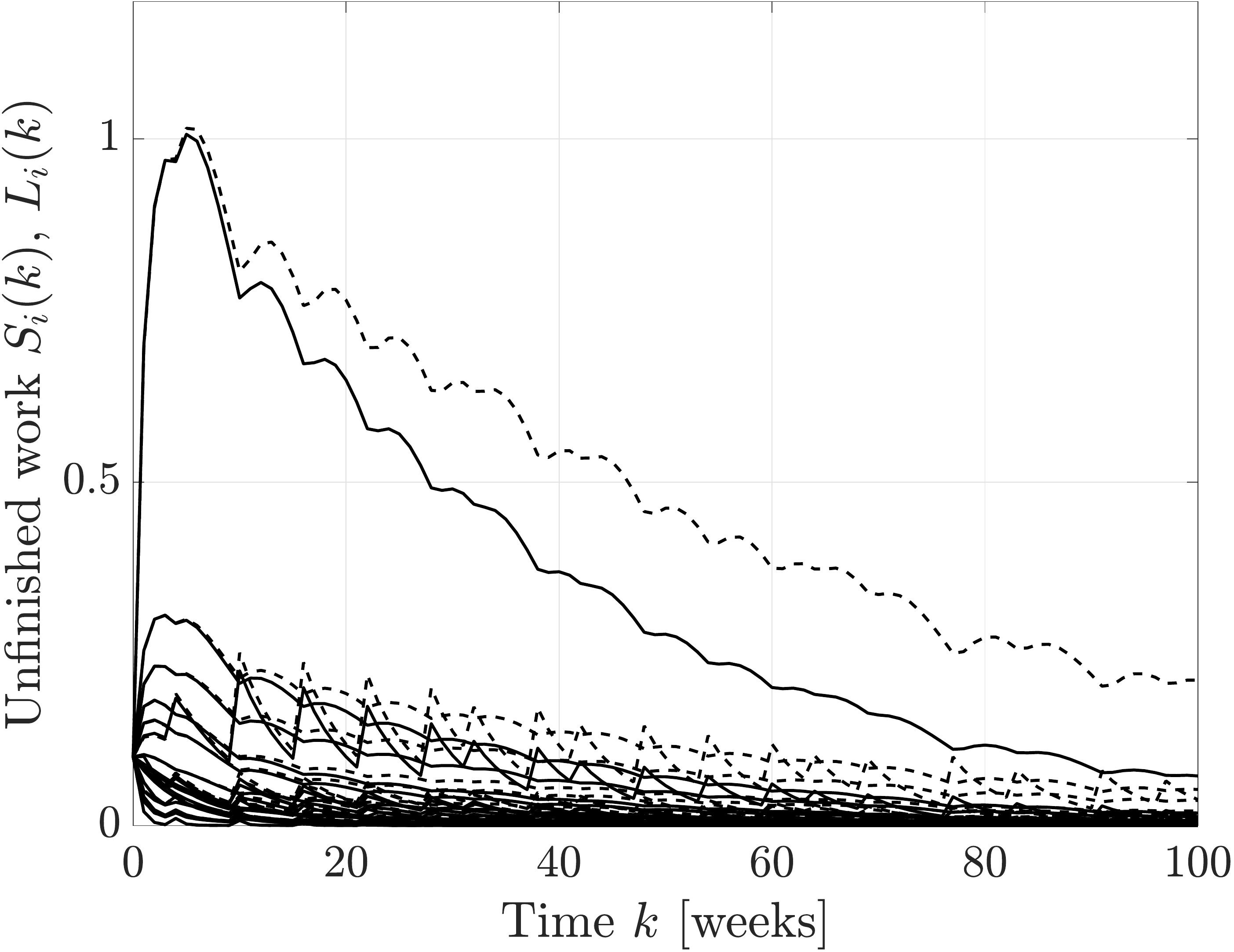}
\caption{Amount of unfinished work for each design component and teams. Solid lines: proposed strategy. Dashed lines: baseline strategy.}
\label{fig:samplePaths}
\vspace{3mm}
\centering
\includegraphics[height=\figheight]{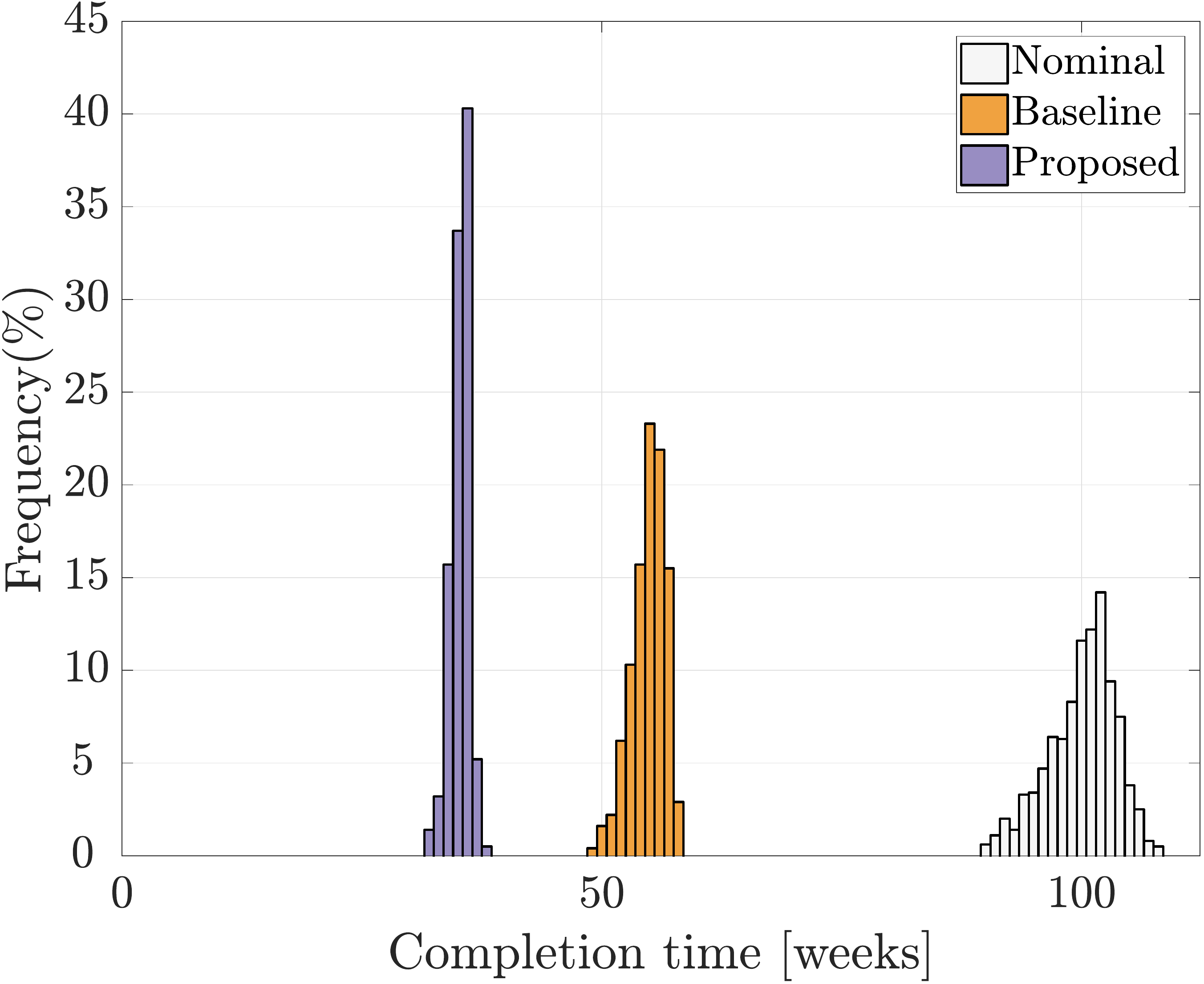}
\caption{{Histograms of the completion times of the nominal project, the project with the baseline investments, and the one with the proposed investments.}}
\label{fig:hist}
\end{figure}

\subsection{Results}\label{sec:results}

For simplicity of illustration, we first fix the probabilities~\eqref{eq:randomInerval} as $p_1= p_2 = p_3 = 0$, $p_4 = p_5 = 1/8$, $p_6 = 1/2$, and $p_7 = p_8 = 1/8$. We set $\epsilon = 0.85$ (i.e., a full investment eliminates 15\% of the nominal dependency) and solve the budget-constrained dependency optimization problem with the budget $B=1.5$. Using the baseline and proposed strategies, we obtain the two sets of resource distributions for weakening the strength of dependencies. The proposed strategy achieves the better performance~$\rho(\mathcal M) = 0.8484$ compared with the baseline strategy $\rho(\mathcal M) = 0.9079$. We show the obtained investments in Figs.~\ref{table:optInvestments} and~\ref{table:conInvestments}. A remarkable difference between the proposed and baseline strategies can be found in the investments on~$\Omega_{LS}$; although the proposed strategy invests no resource on~$\Omega_{LS}$, the baseline strategy spends more than the half of the budget in its improvement. 

\textcolor{\changesColor}{Let us then observe how the investment from the proposed strategy depends on the centrality metrics of the network underlying the PD process. For this purpose, we determine the aggregated amount of investments on an individual task by the total sum of investments on the dependencies involving the task. For example, the aggregated amount of investment on the task $L_9$ in the proposed strategy equals $(8.1+2.1+4.0+1.9)/100 = 0.161$. On the other hand, to compute the centrality metrics of each task, we construct the extended DSM by the formula
\begin{equation}\label{eq:def:Omega}
\Omega = \begin{bmatrix}
\Omega_{L} & \Omega_{LS}\\
\Omega_{SL} & \Omega_{S}
\end{bmatrix}
\end{equation}
and consider the directed network~$\mathcal G_{\Omega}$ whose adjacency matrix equals~$\Omega$. We then compute the following three different centrality measures for the network~$\mathcal G_{\Omega}$; the betweenness centrality, the PageRank, and the Hub centrality (see, e.g., \cite{Newman2010a}). Each centrality metric is normalized to sum to $2m = 20$. In Fig.~\ref{table:autoInvestments}, we show how the amount of aggregated investments in the proposed strategy depend on the aforementioned centrality measures. Interestingly, we find that the dependencies of the proposed investments on the centrality measures are not very clear for any cases, which contradicts our intuition that, the bigger centrality a task has, the more investment the task should receive. This observation indicates that, for the improvement of the automotive PD project, the centrality measures are not necessarily an indicative measure. We shall revisit and discuss this counterintuitive result in Section~\ref{sec:synth}.}

We next observe how the amount of the available budget~$B$ affects performance improvements. In Fig.~\ref{fig:changebudget}, we show the optimized values of the spectral radius $\rho(\mathcal M)$ from the proposed and baseline strategies as we increase $B$ from $0$ to $1.5$. We confirm that the proposed strategy achieves improvement on the baseline strategy for any value of the budget. Then, in Fig.~\ref{fig:changebudgeteach}, we show the investments on the top six dependencies in Fig.~\ref{table:optInvestments} for various values of the budget~$B$. We observe that the amount of investment on each dependency is not necessarily related to the priority of the investment. For example, although the dependency~$\Omega_{LS, 66}$ receives the second least amount of investments with the budget~$B=1.5$, the dependency receives all the available investments when the budget~$B$ is within the range~$[0, 0.15]$. This phenomenon indicates that the dependency~$\Omega_{LS, 66}$ should be invested with the top priority. This nontrivial pattern of the optimal investment indicates the necessity of an analytical tool for PD managers to develop an appropriate resource allocation strategy.

\begin{figure*}[tb]
\color{\changesColor}
\centering
\includegraphics[width=.93\linewidth]{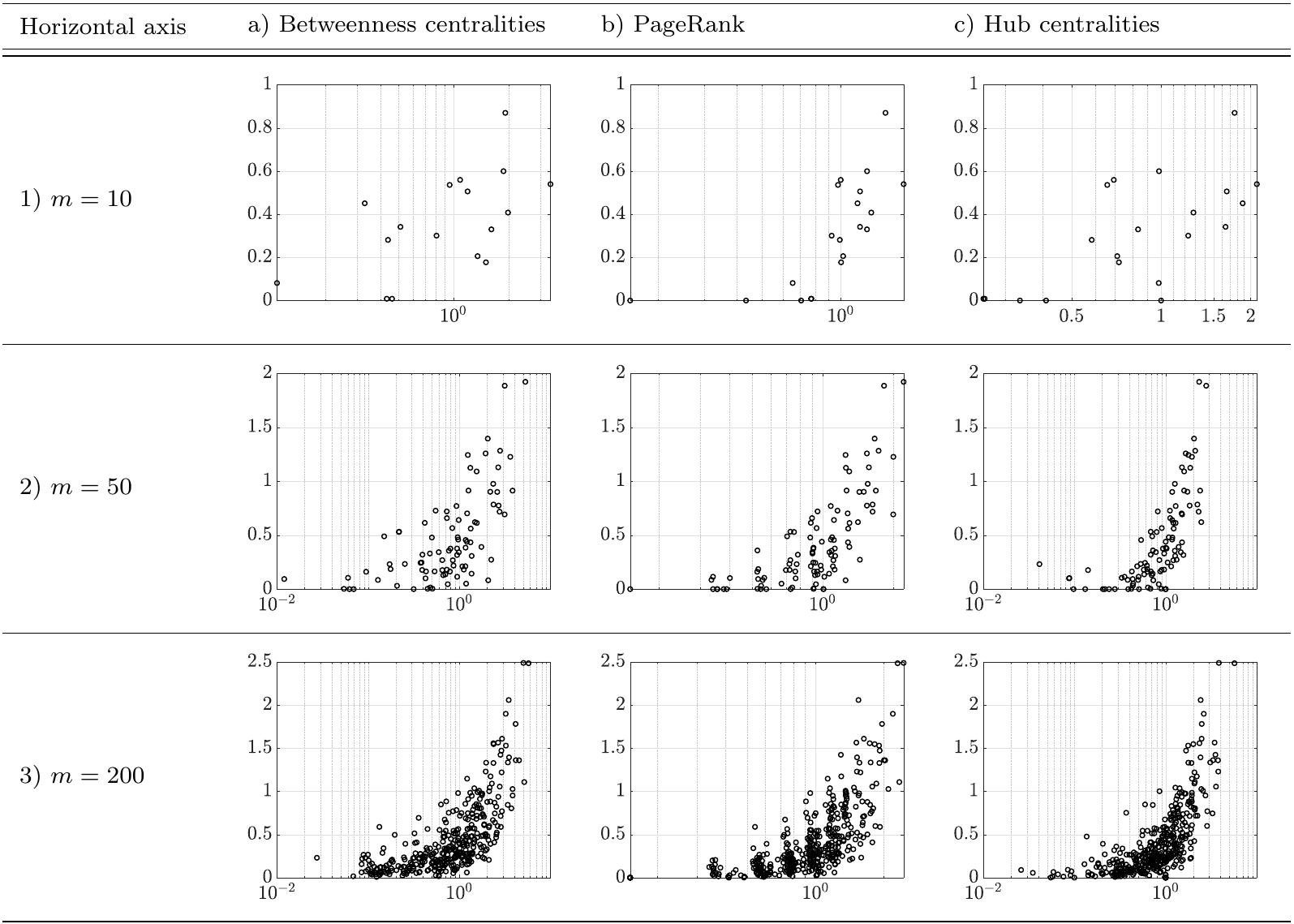}
\caption{Optimal investments versus centrality measures in the Erd\H{o}s-R\'enyi PD networks}
\label{table:ER}
\end{figure*}

\begin{figure*}[tb]
\color{\changesColor}
\centering
\includegraphics[width=.93\linewidth]{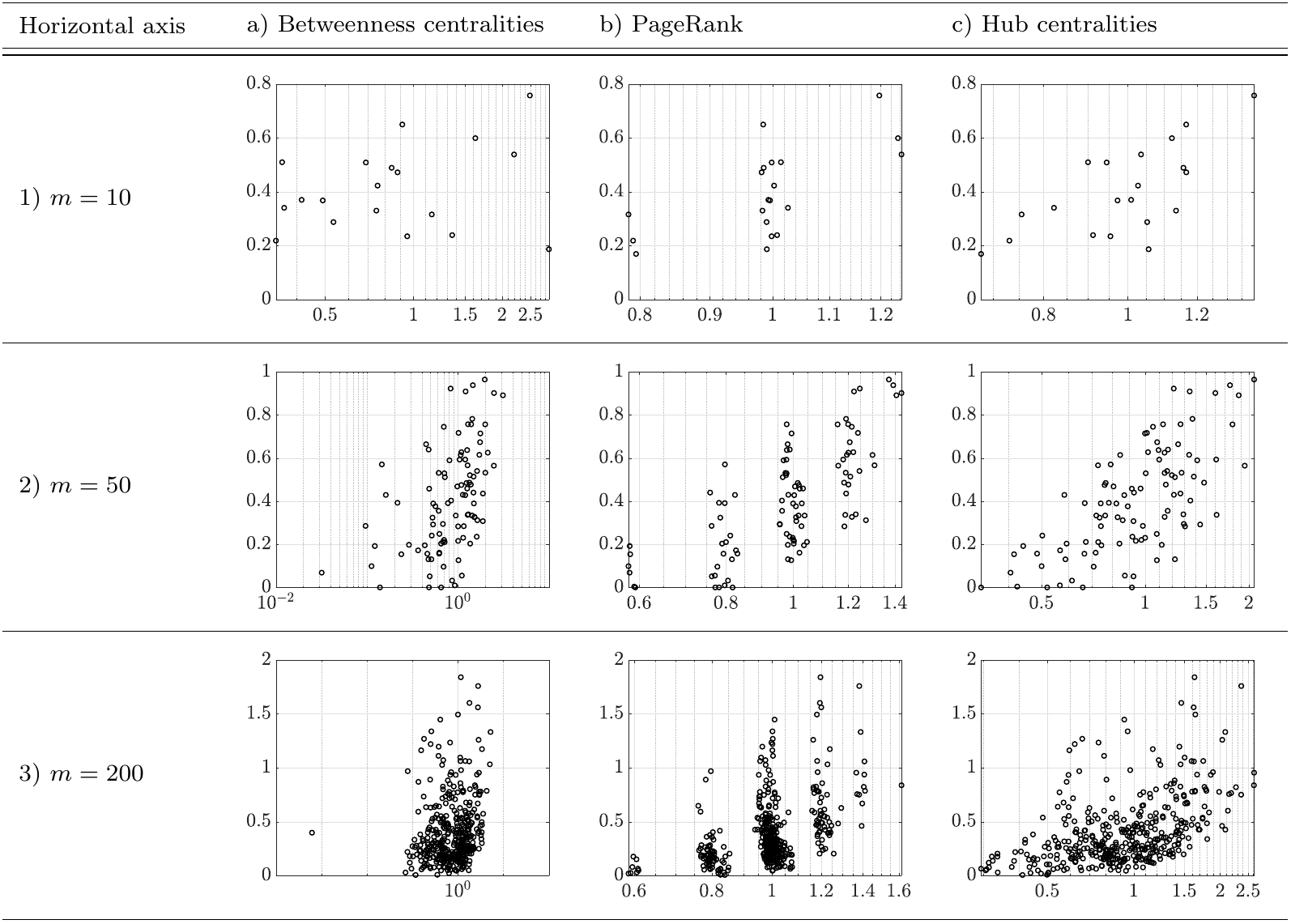}
\caption{Optimal investments versus centrality measures in the Watz-Strogatz PD networks}
\label{table:WS}
\end{figure*}

\begin{figure*}[tb]
\color{\changesColor}
\centering
\includegraphics[width=.93\linewidth]{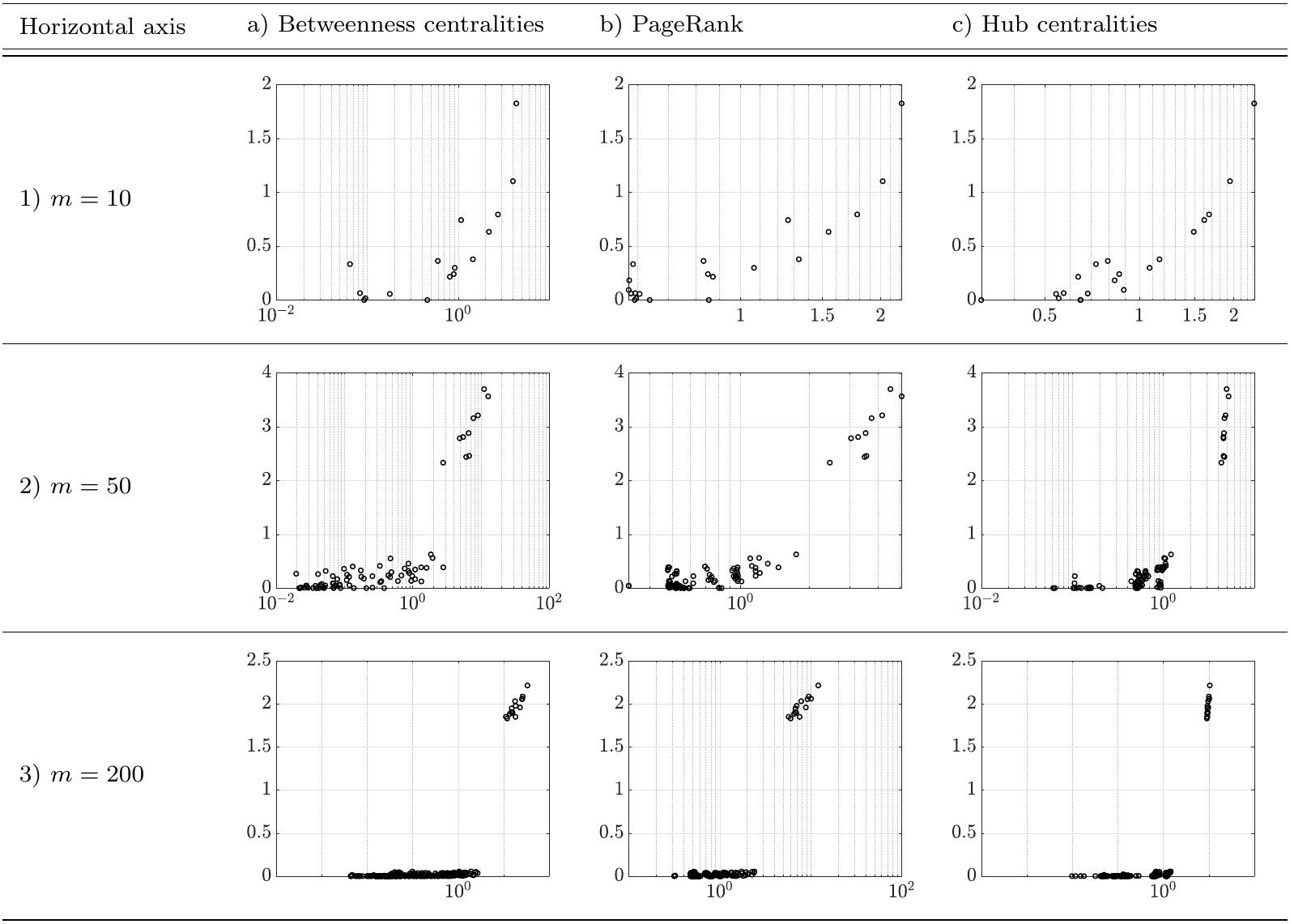}
\caption{Optimal investments versus centrality measures in the Barab\`asi-Albert PD networks}
\label{table:BA}
\end{figure*}

Let us finally compare the amounts of reductions in project completion times. In Fig.~\ref{fig:samplePaths}, we show the trajectories of the amount of unfinished work when the proposed and baseline strategies are applied. We see that the proposed strategy achieves a faster decay in the amount of unfinished work. In order to further investigate the sensitivity of this analysis, we perform the following experiment. Let us suppose that the design project completes if the total amount of unfinished works across the project falls below a specified level, say,~$\gamma$. In this case study, this level is set as $\gamma = 1$ (i.e., $1/20$ units of unfinished work within each task in average). In Fig.~\ref{fig:hist}, we show the histogram of completion times of the nominal project, the project with the baseline investments, and the one with the proposed investments. To create the histogram, we randomly draw the numbers~$(p_4, p_5, p_6, p_7, p_8)$ from a uniform distribution on the set~$\{(p_4, p_5, p_6, p_7, p_8)\colon p_4\geq 0,\, p_5\geq 0,\,p_6\geq 0,\,p_7\geq 0,\,p_8\geq 0,\,p_4+p_5+p_6+p_7 + p_8 = 1\}$. By following this procedure, we randomly generate a thousand probability distributions for the feedback intervals and, for each distribution, we find the resource allocation using the proposed and baseline strategies. We then simulate the design process and obtain one thousand sample paths for the nominal project, the project with the baseline investments, and the one with the proposed investments, from which we compute the completion times. From Fig.~\ref{fig:hist}, we see that the proposed strategy can effectively reduce the project completion times.

\color{\changesColor}

\section{Centrality metrics}\label{sec:synth}

In the case study that we performed in the previous section, we have observed that a task having a larger centrality does not necessarily receive a bigger investment at the optimality (see Fig.~\ref{table:autoInvestments}). On the other hand, it was shown in the reference~\cite{Braha2007} that the dynamics and performance of engineering networks are dominated by a few highly central nodes. Specifically, it was observed in the reference that we can exploit such connectivity patterns of complex design networks to suppress the amplification and propagation of design changes and errors through the network. In order to explain this apparent contradiction and provide reliable and practical guidelines for resource investments in engineering design networks, in this section we analyze how the nominal boundary conditions on the DSMs and IDMs, i.e., the connectivity of the design networks, affects the dependence of the optimal investments on centrality measures.

For this purpose, we consider the cases where the structure of the PD network is determined by the following three major network models; the Erd\H{o}s-R\'enyi \cite{ErdHos1959}, Watz-Strogatz \cite{Watts1998}, and  Barab\'asi-Albert \cite{Barabasi1999} graphs. The Erd\"os-R\'enyi graph is one of the earliest random graph models and commonly used in the literature, partly due to the fact that the model is easy to analyze. On the other hand, the Watz-Strogatz and Barab\'asi-Albert graphs are more practical network models able to reproduce, respectively, the small-world and scale-free properties,  which are frequently observed in empirical complex networks. For more detailed accounts on the graph models from the perspective of engineering design, we refer the readers to the references~\cite{Braha2004a,Braha2004}.

For each type of graph models, we construct a sample network (i.e., a realization) having $2m$ nodes for $m=10$, $50$, and $200$, respectively (recall that the PD process under our consideration contains $m$ components and each component is separately assigned into the local and system teams). Then, we set the extended DSM of the PD process, given in \eqref{eq:def:Omega}, by the formula~$\Omega = cA$, where $A$ is the adjacency matrix of the sample network and the constant~$c > 0$ is chosen in such a way that the generalized WTM, given by~\eqref{eq:complexMat}, has the spectral radius one, i.e., $\rho(\mathcal M) =1$. Under this setting, the nominal PD project without resource allocations is (marginally) not feasible. For this nominal PD project, we solve the convex optimization problem~\eqref{eq:origOptDMSIDM:pc} to obtain the optimal investments that solve the budget-constrained dependency optimization problem.  In this optimization, we use the parameter~$\epsilon = 0.5$ and set the budget~$B$ to be 10\% of the full resource allocation. Also, we normalize the sum of the centralities to be equal to~$2m$, as was done in the case study in the previous section.

In Fig.~\ref{table:ER}, we show the amount of the investments for individual tasks versus the centralities of the network~$\mathcal G_{\Omega}$ for the case of Erd\H{o}s-R\'enyi PD networks. We can observe a tendency that, when the network size is relatively large ($m=50$ and $m=200$), tasks having bigger centralities tend to receive heavier investments. However, this tendency is not very strong for us to decide on investing towards only on the tasks having higher centralities, as there still exist some tasks receiving relatively heavy investments but having smaller centralities. We also note that, when the network size is small ($m=10$), the amounts of investments seem not to have a significant correlation with any of the centrality measures. The above observations suggest that, when the network underlying a PD process exhibits a random-graph like structure, the proposed investment strategy could outperform the ones purely based on centrality measures. We can also draw a similar conclusion from Fig.~\ref{table:WS} for the Watz-Strogatz network model.

However, the guidelines for the cases of Erd\H{o}s-R\'enyi and Watz-Strogatz PD networks do not immediately extend to the last network model of Barab\'asi-Albert graphs. In Fig.~\ref{table:BA}, we show the amount of investments on individual development tasks versus the centralities in the case of the Barab\`asi-Albert graph. In contrast with the former two cases (Figs.~\ref{table:ER} and~\ref{table:WS}), we see that the amounts of investments almost monotonically increase with the centralities, even for the small network size of $m=10$. Furthermore, in the case of $m=200$, most of the tasks receive almost zero investments, while a few high-centrality nodes are heavily invested for the improvement of the PD process, as previously reported in the reference~\cite{Braha2007}.

Let us summarize our findings as follows. As can be observed from the second and third rows in Fig.~\ref{table:BA}, if a PD network has a scale-free property and is relatively large-scale, then heavily investing on high-centrality tasks would yield the best outcome, which coincides with findings in the literature. However, this natural investment strategy may not work well when the underlying PD networks are rather random or have a small-world property, as can be seen from Figs.~\ref{table:ER} and~\ref{table:WS}. In such a case, the project managers are encouraged to consider not only high-centrality tasks but also the ones having `middle' centralities to obtain the best outcome from investing in their PD projects. In fact, the PD network of our case study in Section~\ref{sec:caseStudy} falls into this category; as Fig.~\ref{table:DSMIDM} indicates, the underlying network structure is neither large-scale nor scale-free. This characteristics particularly explains the small correlation between the amount of the optimal investments and the centralities of the individual tasks.
\color{black}

\section{Conclusion and discussion} \label{sec:con}

In this paper, we have presented an analytical framework for optimizing the feasibility of PD projects having a time-varying architecture. We have focused on the specific but widely-observed situation in which system and local team asynchronously interact for development. By using a result from the systems and control theory, we first have derived the feasibility index of the project for determining the long-term feasibility of the project. Building on this analysis result, we have shown that the optimal resource distribution solving the budget-constrained and performance-constrained dependency optimization problems can be efficiently found by solving convex optimization problems. The effectiveness of the framework has been illustrated by the case study taking an automobile PD project as an example. \textcolor{\changesColor}{We have furthermore analyzed how the boundary conditions on the DSMs and IDMs affect the optimal investment patterns by using major graph models from the Network Science.}

It has long been implicitly assumed in most of the resource-allocation support tools that the dependency structure between modules is static (i.e., does not change over time) in PD projects. However, in several practical situations, the dependency structure experiences dynamical changes over time due to various endogenous and exogenous reasons. Despite this fact, little research effort has been spent on the issue of making optimal decisions on resource allocation for developing design rules. This is in great contrast with the research trend in the network science, in which the time-variability of network structures has been attracting growing attention~\cite{Holme2015b,Masuda2016b} \textcolor{\changesColor}{since their discovery~\cite{Braha2006,Braha2009}}. This paper is the first to shed a light on the issue somewhat overlooked in design engineering.

Several limitations of the proposed framework suggest fruitful directions for future research. One of the limitations of the current study stems from the fact that the framework allows the case in which the project has one and only one system and local teams. This limitation is mostly due to that of regenerative processes; if there are multiple system or local teams, then one can confirm that the work transformation between the teams cannot be described by a regenerative process. However, by employing more general results from systems and control theory such as the one in~\cite{Ogura2015}, it would become possible to deal with such a general case.

Data availability is another issue that should be strengthened in future research. There exist established methodologies for the measurement of DSMs and WTMs in the literature (see, e.g., \cite{Browning2016}). On the other hand, we have not addressed the details on measuring the distribution \eqref{eq:randomInerval} of the interval between feedbacks. If past historical data of the same or similar projects is available to the PD manager, then it is realistic to estimate the distribution to a certain precision. However, if the PD project is completely new and, therefore, such past history is not available, then we would have to rely on a rough estimate by the PD manager, which could deteriorate the reliability of our framework. One of the possible approaches to solve this issue is to use robust control theoretical tools on switched linear systems (see. e.g., \cite{Lin2007,Ogura2015a}).

The current framework is tailored to large-scale, complex, and dynamical PD project in which communications between teams does not necessarily regularly occur but are asynchronous. For this reason, the current framework is not directly applicable to other widely-used frameworks such as Scrum for software development processes, in which communications between development members occur continuously and, therefore, the dependency structure within the project is not likely to change dynamically. However, it has been demonstrated in the network science literature that ignoring even a small temporality in dependency structure can lead to an ineligible error in our analysis~\textcolor{\changesColor}{\cite{Fefferman2007,Vazquez2007,Braha2006,Hill2010b}}. Such findings suggest that the currently widespread paradigm of ``static dependency'' \textcolor{\changesColor}{for PD projects improvements} may have to be carefully reconsidered.

\end{document}